
\documentclass[12pt,letterpaper]{article}

\usepackage{amsmath}
\usepackage{amssymb}
\usepackage{amsthm}
\usepackage{graphicx}
\usepackage{mathtools} 
\usepackage[all]{xy}

\newcommand{\Z}{\mathbb{Z}}
\newcommand{\Q}{\mathbb{Q}}
\newcommand{\R}{\mathbb{R}}
\newcommand{\C}{\mathbb{C}}
\renewcommand{\H}{\mathbb{H}}
\newcommand{\GL}{\mathrm{GL}}
\newcommand{\SL}{\mathrm{SL}}
\newcommand{\bs}{\backslash}
\DeclareMathOperator{\Tr}{\mathrm{Trace}}
\DeclareMathOperator{\vcd}{\mathrm{vcd}}

\newtheorem*{corollary}{Corollary}
\newtheorem*{lemma}{Lemma}
\newtheorem*{proposition}{Proposition}
\newtheorem{theorem}{Theorem}

\theoremstyle{definition}
\newtheorem{definition}{Definition}

\theoremstyle{remark}
\newtheorem*{remark}{Remark}

\allowdisplaybreaks

\begin{document}

\title{Computing Hecke Operators for Arithmetic Subgroups of General
  Linear Groups}
\author{
  Mark McConnell\footnote{Dept.\ of Mathematics, Princeton University.  \texttt{markwm@princeton.edu}} \and
  Robert MacPherson\footnote{School of Mathematics, Institute for Advanced Study.  \texttt{rdm@ias.edu}}
}

\maketitle

\begin{abstract}
  We present an algorithm to compute the Hecke operators on the
  equivariant cohomology of an arithmetic subgroup~$\Gamma$ of the
  general linear group $\GL_n$.  This includes $\GL_n$ over a number
  field or a finite-dimensional division algebra.  As coefficients, we
  may use any finite-dimensional local coefficient system.  Unlike
  earlier methods, the algorithm works for the cohomology $H^i$ in all
  degrees~$i$.  It starts from the well-rounded retract
  $\widetilde{W}$, a $\Gamma$-invariant cell complex which computes
  the cohomology~\cite{Ash84}.  It extends~$\widetilde{W}$ to a new
  \emph{well-tempered complex} $\widetilde{W}^+$ of one higher real
  dimension, using a real parameter called the \emph{temperament}.
  The algorithm has been coded up for~$\SL_n(\Z)$ for $n=2,3,4$; we
  present some results for congruence subgroups of $\SL_3(\Z)$.
\end{abstract}

\section{Introduction}

\subsection{The $2\times 2$ Case}

We begin by describing our Hecke operator algorithm in the simplest
case.  Let~$E$ be the space of real symmetric matrices $Z =
\left[\begin{smallmatrix} a&b\\b&c \end{smallmatrix}\right]$.  The
group $G = \GL_2(\R)$ acts on~$E$ by $Z \mapsto g Z g^t$.  The action
preserves the cone of positive definite matrices~$X \subset E$.  We
mod out in~$X$ by the positive scalar multiples of the identity, the
\emph{homotheties}.  Then the action (when $\det g > 0$) is equivalent
to the well-known action of $\SL_2(\R)$ on the upper half-plane
$\mathfrak{H}\subset\C$ by linear fractional transformations.  $X$ is
called the \emph{Klein model} of~$\mathfrak{H}$.  The action
on~$\mathfrak{H}$ has the advantage of being complex-analytic, but the
Klein model has a different advantage: $X$ has a \emph{linear}
structure inherited from~$E$.

A matrix $Z \in X$ defines a positive definite quadratic form
on~$\R^2$, which we use to measure the lengths of vectors in the
standard lattice $L_0 = \Z^2$.  Divide~$Z$ by a homothety so the
shortest non-zero vector in $L_0$ has length~$1$.  We say~$Z$ is
\emph{well rounded} if two linearly independent vectors in~$\Z^2$ have
length~$1$.  Figure~\ref{fig:pencil} shows the $Z=1$ level curves of
five forms, which are well rounded because $(1,0)$, $(0,1)$ have
$Z=1$.  The figure suggests there is a continuous path, a pencil of
well-rounded forms, connecting the first and last pictures.

\begin{figure}
  \begin{center}
    \includegraphics[scale=0.15]{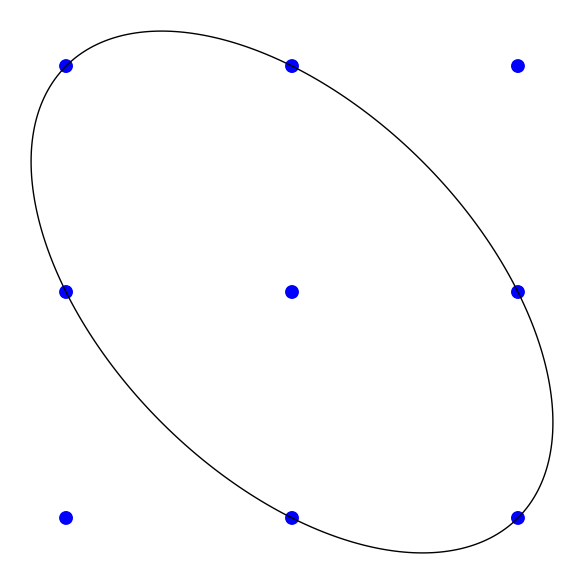} $\,\,\,\,$
    \includegraphics[scale=0.15]{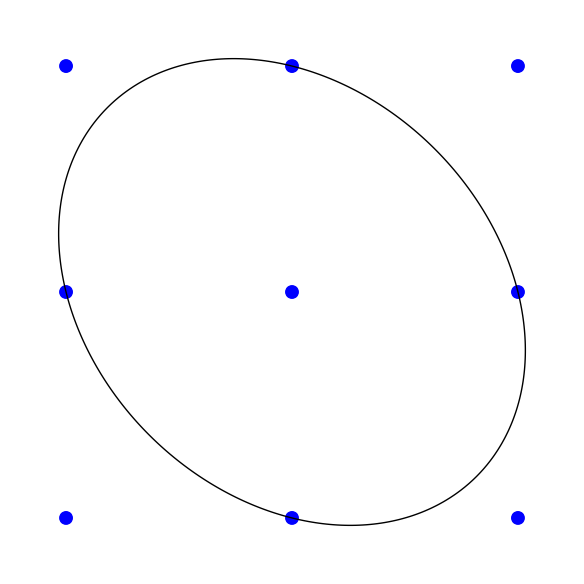} $\,\,\,\,$
    \includegraphics[scale=0.15]{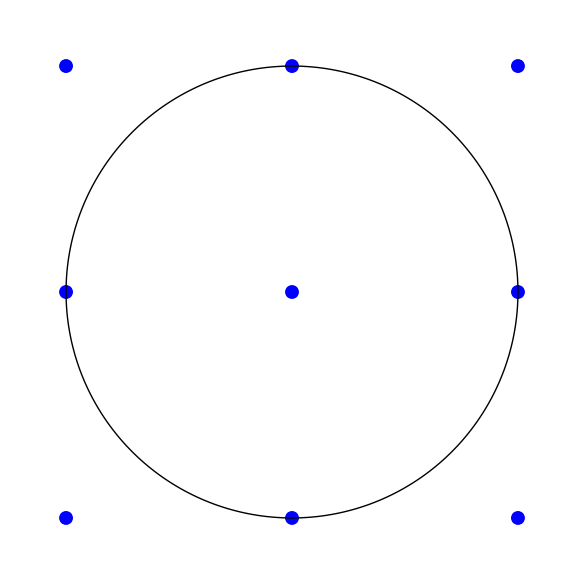} $\,\,\,\,$
    \includegraphics[scale=0.15]{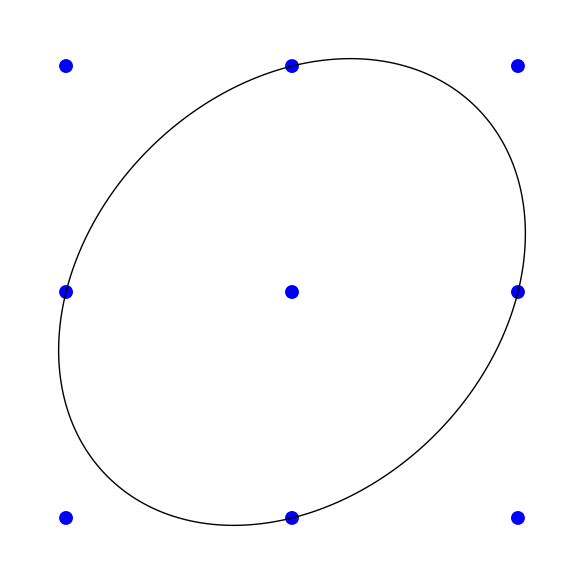} $\,\,\,\,$
    \includegraphics[scale=0.15]{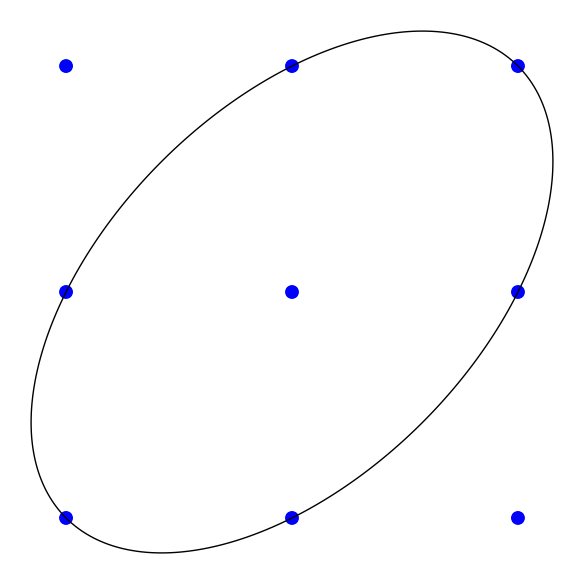}
  \end{center}
  \caption{A pencil of well-rounded forms.}
  \label{fig:pencil}
\end{figure}

To draw pictures in~$X$, we divide~$Z$ by a different homothety to
make its trace~$1$.  The image of~$X$ is then an open circular disc in
the $ab$-plane.  The well-rounded forms form a tree~$\widetilde{W}$
in~$X$, shown on the left in Figure~\ref{fig:wrr2T2}.  The vertical edge
in the center of the tree is the pencil in Figure~\ref{fig:pencil}.

\begin{figure}
  \begin{center}
    \includegraphics[scale=0.3]{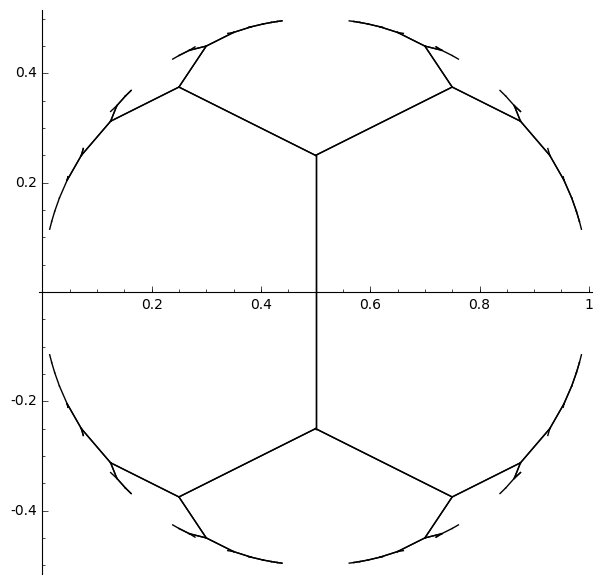} $\quad$
    \includegraphics[scale=0.3]{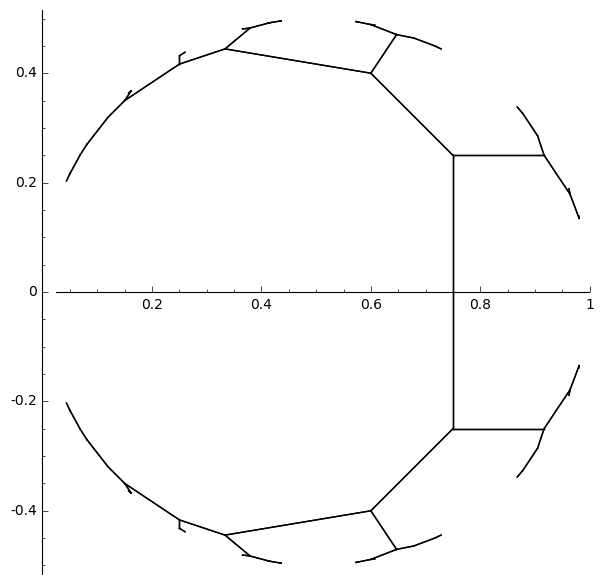}
  \caption{The well-rounded retract for $\GL_2(\Z)$, and its translate
    by $T_2$.}
  \label{fig:wrr2T2}
  \end{center}
\end{figure}

We can also understand~$Z$ in terms of lattices.  $X = G/K$, where $K$
is the subgroup of orthogonal matrices.  $Z = g g^t$ for some $g\in
G$.  The rows of~$g$ are a basis of~$\R^2$, and the $\Z$-linear
combinations of this basis are a lattice $L = L_0 g$.  Let $\Gamma_0 =
\GL_2(\Z)$, the integer matrices of determinant~$\pm1$.  Then $Y =
\Gamma_0\bs G$ is the space of lattices, and $Y/K = \Gamma_0\bs X$.
The tree~$\widetilde{W}$ is $\Gamma_0$-equivariant and is a strong
deformation retract of~$X$.  It is called the \emph{well-rounded
  retract}.

Let~$m$ be a positive integer.  The \emph{Hecke correspondence} $T_m$
is the one-to-many map that carries a lattice~$L$ to its sublattices
of index~$m$.
Figure~\ref{fig:sublattT2} shows the three sublattices for $\ell=2$.

\begin{figure}
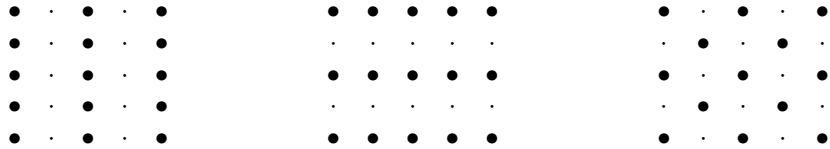

  \begin{center}
    \begin{footnotesize}
    $\begin{matrix}
      \bullet&\cdot&\bullet&\cdot&\bullet \\
      \bullet&\cdot&\bullet&\cdot&\bullet \\
      \bullet&\cdot&\bullet&\cdot&\bullet \\
      \bullet&\cdot&\bullet&\cdot&\bullet \\
      \bullet&\cdot&\bullet&\cdot&\bullet
    \end{matrix}
    \qquad\qquad\qquad
    \begin{matrix}
      \bullet&\bullet&\bullet&\bullet&\bullet \\
      \cdot&\cdot&\cdot&\cdot&\cdot \\
      \bullet&\bullet&\bullet&\bullet&\bullet \\
      \cdot&\cdot&\cdot&\cdot&\cdot \\
      \bullet&\bullet&\bullet&\bullet&\bullet
    \end{matrix}
    \qquad\qquad\qquad
    \begin{matrix}
      \bullet&\cdot&\bullet&\cdot&\bullet \\
      \cdot&\bullet&\cdot&\bullet&\cdot \\
      \bullet&\cdot&\bullet&\cdot&\bullet \\
      \cdot&\bullet&\cdot&\bullet&\cdot \\
      \bullet&\cdot&\bullet&\cdot&\bullet
    \end{matrix}$
    \end{footnotesize}
  \end{center}
  \caption{A lattice and its three sublattices of index two.}
  \label{fig:sublattT2}
\end{figure}

Let $\Gamma\subseteq\Gamma_0$ be a subgroup of finite index.  The
correspondence~$T_m$ acts on the cohomology $H^1(\Gamma\bs X)$, and
this action, also denoted~$T_m$, is a \emph{Hecke operator}.  By work
of Eichler and Shimura \cite{Sh}, the cohomology can be understood in
terms of modular forms (Eisenstein series and cusp forms) on
$\Gamma\bs\mathfrak{H}$.  The Hecke operators act on modular forms,
and their eigenvalues give important arithmetic information.

We would like to compute the Hecke operators by how they act on the
tree~$\widetilde{W}$.  Unfortunately, the Hecke correspondences do not
carry the tree to itself.  $T_2$ moves it within the disc, as shown on
the right in Figure~\ref{fig:wrr2T2}.  (The $T_m$ are one-to-many on
$\Gamma_0\bs X$ but are defined by a one-to-one map on~$X$; see
Section~\ref{subsec:defhecke} for the definition.)  The deformation
retraction $X \to \widetilde{W}$ will carry the right-hand tree back
to the left-hand tree, but when~$m$ is large this is hard to use for
computation.  One edge may retract back to a path through many edges,
starting and ending with fractions of edges.

This paper introduces the \emph{well-tempered complex
  $\widetilde{W}^+$ for $T_m$}.  For simplicity, let~$m$ be a
prime~$\ell$ for the rest of the Introduction.  The well-tempered
complex is a regular cell complex of dimension $\dim\widetilde{W} +
1$.  It is a fibration $\widetilde{W}^+ \to [1, \ell^2]$, where the
coordinate~$\tau$ in the base is called the \emph{temperament}.  The
fiber over~$\tau=1$ is on the left side of Figure~\ref{fig:wrr2T2},
with the fiber over~$\ell^2$ on the right.  The fibers are all trees,
and they continuously deform as~$\tau$ varies.  We show there are a
finite number of \emph{critical temperaments} where the cell structure
abruptly changes.  Figure~\ref{fig:wrr2T2near2} shows an example for
$T_2$, around $\tau=2$ in the base $[1,4]$.  All the fibers are
3-valent trees---every vertex is on three edges---except in the middle
picture, where for a moment some vertices are on four edges.

\begin{figure}
  \begin{center}
    \includegraphics[scale=0.2]{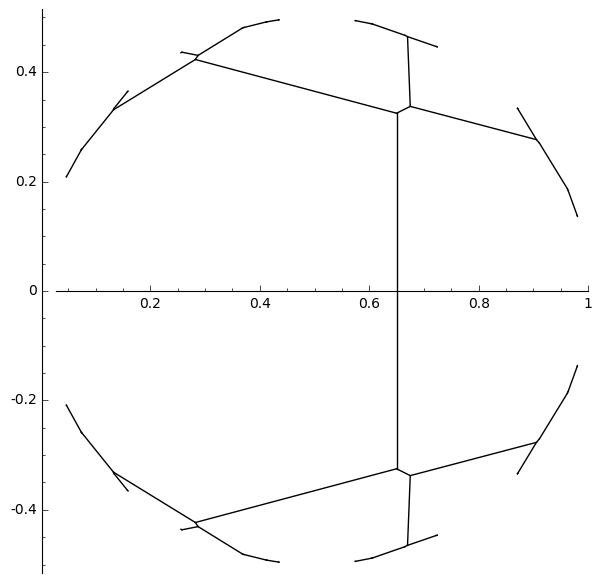}
    \includegraphics[scale=0.2]{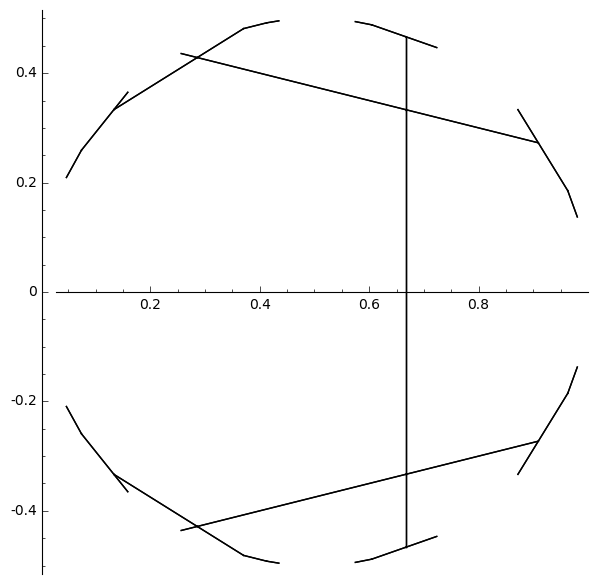}
    \includegraphics[scale=0.2]{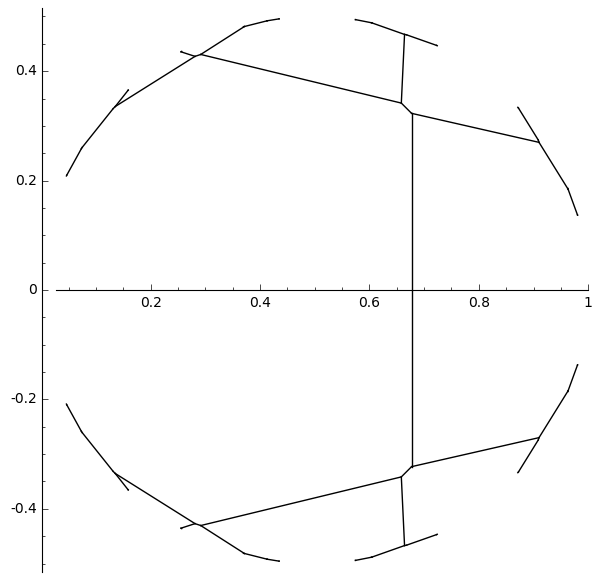}
  \end{center}
  \caption{Continuously deforming trees in the well-tempered complex.}
  \label{fig:wrr2T2near2}
\end{figure}

A deformation retract of $\Gamma\bs X$ is $\Gamma\bs \widetilde{W}_1$,
the finite graph which is the left side of Figure~\ref{fig:wrr2T2}
mod~$\Gamma$.  We compute $H^1(\Gamma\bs \widetilde{W}_\tau)$
as~$\tau$ varies from~$1$ to $\ell^2$.  At the critical temperaments,
the cohomology does not change, but the cell structure does, and we
keep track of it.  When~$\tau$ reaches the last temperament~$\ell^2$,
we have the information we need to compute~$T_\ell$ on the cohomology.
The algorithm is explained in more detail in
Section~\ref{sec:hecketope}.

\subsection{The General Case}

For $\Gamma\subseteq \GL_2(\Z)$, there is already a well-known method
for computing Hecke operators, \emph{modular symbols} \cite{Man}
\cite{St}.  It works on the top degree of cohomology,~$H^1$.  The
contribution of our paper is to give a Hecke operator algorithm on the
cohomology $H^i(\Gamma\bs X; \rho)$ for all $i=0, 1, 2, \dots$. This
holds for $\GL_n$ for all~$n\geqslant 2$.  Our algorithm works for all
local coefficient systems~$\rho$.

The general setting of this paper is the algebraic group~$\mathbb{G}$
where $\mathbb{G}(\Q)$ is the general linear group of a division
algebra~$D$ (possibly commutative) of finite dimension over~$\Q$.
When $D=\Q$, we have $\mathbb{G}(\Z) = \Gamma_0 = \GL_n(\Z)$, the case
described in the Introduction so far (for $n=2$).  In~\cite{Ash84},
Avner Ash constructed the well-rounded retract in this general setting
of division algebras.  In Section~\ref{sec:wrr}, we set up the rest of
the paper by reviewing notation and facts from~\cite{Ash84}.  The
retract depends on a family of weights; in Section~\ref{sec:varywts},
we show that varying the weights over an interval of temperaments
gives a fibration.

For any of the~$\mathbb{G}$ in this paper, the definition of a Hecke
correspondence starts from a sublattice $M_0 \subset L_0$ of finite
index.  (In our $2\times2$ example for $T_\ell$, the~$M_0$ is any one
of the index-$\ell$ sublattices of $\Z^2$.)  The form~$Z$ generalizes
to a Hermitian form~$Z$ which defines a Hermitian metric on~$L_0$.
The main idea of the well-tempered complex is to lie about the lengths
of vectors $x\in L_0 - M_0$.  We use a family of weights where the
weighted length of~$x$ is~$\tau$ times the length of~$x$ if $x \in L_0
- M_0$, but the weighted length of~$x$ equals the length of~$x$ if
$x\in M_0$.  When $\tau=1$, there are no lies, and we obtain the usual
well-rounded retract.  When~$\tau$ is large, vectors $x \in L_0 - M_0$
cannot have small weighted length: all the vectors of weighted
length~$1$ in a well-rounded lattice will be in~$M_0$.  For~$\tau$
between the extremes, the critical temperaments occur when the
changing weights make the cell structure vary.  This is the topic of
Section~\ref{sec:wtc}.

In Section~\ref{sec:hos}, we show that we can compute the Hecke
operators with the well-tempered complex.  In
Section~\ref{sec:hecketope}, we indicate how the different fibers of
the well-tempered complex are computed in practice.  This involves a
weighted generalization of the Voronoi polyhedron, the
\emph{Hecketope}.

The algorithm has been coded up for $\SL_n(\Z)$ for $n=2,3,4$ by the
first author.  It performs successfully for congruence subgroups of
small levels~$N$.  Section~\ref{sec:sl3ex} contains a sampling of
results for subgroups of $\SL_3(\Z)$.  In the near future, we hope to
test it for $\GL_2$ when~$D$ is a quadratic number field or another
small Galois extension of~$\Q$.

Our algorithm for Hecke operators has two parts, one-time work and
every-time work.  The one-time work is to compute the well-tempered
complex for a given $\Gamma_0$, $L_0$, and Hecke correspondence~$T$.
The every-time work is to compute the Hecke operators for~$T$ on
$H^i(\Gamma_0\bs X, \rho)$ for various~$i$ and~$\rho$.  To compute the
cohomology of subgroups~$\Gamma'$, like the classical $\Gamma_0(N)$
and $\Gamma(N)$ of level~$N$, we simply do the every-time work for the
coinduced representation $\mathrm{Coind}_{\Gamma'}^{\Gamma_0}\rho$.
Other Hecke-operator algorithms, including the classical algorithms
with modular symbols, require one to do the equivalent of the one-time
work and the every-time work every time.  Even for a single Hecke
correspondence, it is possible to run our algorithm as a parallel
computation (see Theorem~\ref{wtcalgor}).

Our current implementation is a proof of concept written in Sage
\cite{SageMath}.  In terms of speed, it is not yet competitive with
Gunnells' algorithm in \cite{Gun} \cite{AGM1}.  Plans for the future
include translating parts of the implementation to C{+}{+} and
parallelizing it.

\subsection{Related Work}

The Ash-Rudolph algorithm generalizes the modular symbol algorithm
\cite{AR}.  For any~$n$, it computes Hecke operators on $H^i(\Gamma\bs
X; \rho)$ for $\Gamma\subseteq\SL_n(\Z)$.  It works only in the top
degree, $i = \vcd\Gamma$.  Ash and his collaborators have many papers
that study cohomology using this algorithm, starting with~\cite{AGG}.

Gunnells extended the Ash-Rudolph algorithm to the next lower degree,
where $i = \vcd\Gamma - 1$.  This works for $\Gamma\subseteq\SL_n(\Z)$
for all~$n$ \cite{Gun}.  In a series of papers including \cite{AGM1}
\cite{AGM3} \cite{AGM7} \cite{AGM8}, Ash, Gunnells, and the first
author have computed Hecke operators on $H^5(\Gamma_0(N)\bs X; \rho)$,
which is in the cuspidal range of cohomology, for a variety of
levels~$N$ and coefficient systems~$\rho$.  From the Hecke
eigenvalues, these papers find Galois representations that are
conjecturally attached to the cohomology.

For other computations of Hecke operators on the cohomology of
subgroups of SL or GL over a number field, see for example
\cite{Men79} \cite{Cre84} \cite{Vog85} \cite{Ash86} \cite{vGvdKTV}
\cite{Rahm} \cite{CreAr} \cite{GGHSSY} \cite{AGMY} \cite{GMY}.

\subsection{Acknowledgments}

Avner Ash's paper \cite{Ash84} is foundational for this paper.  We
thank him for helpful conversations, as well as Paul Gunnells, Dan
Yasaki, Dylan Galt, and David Pollack.

We thank Tony Bahri for suggesting the name \emph{well tempered}.  See
also~\cite{DM}.  \emph{Soli Deo Gloria.}


\section{Background on the Well-Rounded Retract}
\label{sec:wrr}

We summarize notation and facts from~\cite{Ash84}.  We reverse left
and right, and rows and columns, relative to~\cite{Ash84} because we
prefer to have our arithmetic groups acting on the left on cell
complexes.

Let~$D$ be a division algebra (possibly commutative) of finite
dimension over~$\Q$.  Let $S = D \otimes_\Q \R$.  Let $G = \GL_n(S)$.
The space of row vectors $S^n$ is an $\R$-vector space, a left
$S$-module, and a right $G$-module.

A $\Z$-lattice in $D^n$ is a finitely generated subgroup of row
vectors that contains a $\Q$-basis of $D^n$.  A $\Z$-lattice in $S^n$
is a finitely generated subgroup that contains an $\R$-basis.  Let
$L_0 \subset D^n$ be a $\Z$-lattice.  The subset~$A$ of~$D$ that
stabilizes~$L_0$ is an order in~$D$.  The subgroup~$\Gamma_0$ of~$G$
that stabilizes~$L_0$ is an arithmetic group.  We fix~$L_0$, and hence
$A$ and~$\Gamma_0$, once and for all.

Fix an arithmetic subgroup $\Gamma\subseteq\Gamma_0$.  Let $Y =
\Gamma\bs G$.  If $\Gamma=\Gamma_0$, then~$Y$ is the space of
$\Z$-lattices in $S^n$ stable under~$A$ and isomorphic to~$L_0$ as
$A$-module, and these are the lattices of the form $L_0 g$ for $g\in
G$.  If~$\Gamma$ is a congruence subgroup, then the elements of~$Y$
are lattices with a level structure.  In~\cite{Ash84}, only~$\Gamma_0$
is used, but passing to~$\Gamma$ brings no real changes.

We choose a particular positive definite inner product $\langle \,,\,
\rangle$ on the $\R$-vector space $S^n$; we will present the
definition in Section~\ref{subsec:wrrpf}.  The $K\subset G$ which
preserves $\langle \,,\, \rangle$ is a maximal compact subgroup.  $X =
G/K$ is a symmetric space.  The virtual cohomological dimension
$\vcd\Gamma$ is $\dim(Y/K) - n$.

Let $\mathbb{P}(D^n)$ be the set of lines (rank-one left
$D$-submodules) in~$D^n$.  The quotient $\mathbb{P}(D^n)/\Gamma$ is
finite.  A \emph{set of weights} for~$\Gamma$ is a function $\varphi
: \mathbb{P}(D^n)/\Gamma \to \R_+$. Such a~$\varphi$ defines a
\emph{set of weights} for~$L_0$, a $\Gamma$-invariant function $L_0
- \{0\} \to \R_+$ also denoted~$\varphi$, by $\varphi(x) =
\varphi(Dx)$.  We always normalize~$\varphi$ by dividing through by a
positive real number so that the maximum value in its image is~$1$.
For $L = L_0 g$, a set of weights~$\varphi$ for~$L_0$ defines
$\varphi^L : L - \{0\} \to \R_+$, a \emph{set of weights} for~$L$, by
$\varphi^L(xg) = \varphi(x)$ for $x \in L_0 - \{0\}$.

Here are the main definitions and the main theorem of~\cite{Ash84}.

\begin{definition} \label{arithmin}
  Let $L = L_0 g \in Y$.  The \emph{arithmetic minimum} is
  \[
  m(L) = \min \{ \varphi^L(x) \langle x, x\rangle \mid x \in L -
  \{0\}\}.
  \]
  The
  \emph{minimal vectors} are
  \[
  M(L) = \{x \in L \mid \varphi^L(x)\langle x, x\rangle = m(L) \}.
  \]
  We say~$L$ is \emph{well rounded} if $S\,M(L) = S^n$.  The set of
  well-rounded lattices in~$Y$ with minimum~1 is
  denoted~$\widehat{W}$.
\end{definition}

The functions $m$ and~$M$ are $K$-invariant, because
$\langle\,,\,\rangle$ is.  Hence $\widehat{W}$ is $K$-invariant.

\begin{theorem}[{\cite[Thm.~2.11]{Ash84}}] \label{wrrmain}
  $W = \widehat{W}/K$ is a strong deformation retract of $Y/K$.  It is
  compact and of dimension $\vcd\Gamma$.  The universal
  cover\footnote{Strictly speaking, this is a ramified cover, because
    certain points of~$W$ have finite stabilizer subgroups
    in~$\Gamma_0$.  The barycentric subdivision in the last sentence
    of the theorem produces a triangulation which is compatible with
    the ramified covering map.}  $\widetilde{W}$ of~$W$ is a locally
  finite regular cell complex in~$X$ on which $\Gamma$ acts cell-wise
  with finite stabilizers of cells.  This cell structure has a natural
  barycentric subdivision which descends to a finite cell complex
  structure on~$W$.
\end{theorem}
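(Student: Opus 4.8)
The plan is to produce the retraction from a canonical flow on $Y/K$ that enlarges the span of the minimal vectors, and then to read the cell structure of $\widetilde W$ off the combinatorics of the sets $M(L)$. For $L \in Y$ write $V(L) = S\cdot M(L) \subseteq S^n$, so that by Definition~\ref{arithmin} we have $L \in \widehat W$ exactly when $V(L) = S^n$. If $V(L) \subsetneq S^n$, deform $L$ by contracting the chosen inner product in the directions $\langle\,,\,\rangle$-orthogonal to $V(L)$: replace $\langle\,,\,\rangle$ by the form agreeing with it on $V(L)$ and scaled by $e^{-2t}$ on the orthogonal complement, and renormalize by a homothety so the arithmetic minimum stays~$1$. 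Along this path the weighted lengths $\varphi^L(x)\langle x,x\rangle$ of vectors $x \in V(L)$, in particular of the current minimal vectors, are unchanged, while those of all $x \in L\setminus V(L)$ strictly decrease; since $L$ is discrete and $\varphi^L$ takes finitely many values, there is a first time $t_0 > 0$ at which some vector outside $V(L)$ enters the minimal set, so $V(L)$ strictly grows. Because $\dim_S V$ can increase at most $n$ times the procedure terminates at a well-rounded lattice, the direction is canonically attached to $L$, and the construction is $K$-equivariant (as $m$ and $M$ are), so it descends to a deformation of $Y/K$; it fixes $W$ pointwise, so it is a \emph{strong} deformation retraction. The essential point, and the main obstacle, is continuity: the direction field jumps where $M(L)$ jumps, so one must show the composite flow — after reparametrizing to bounded time — is globally continuous, e.g.\ by realizing it as the flow of a Lipschitz vector field or by direct estimates near the jump loci.

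For the cell structure of $\widetilde W$ in $X = G/K$, declare two well-rounded lattices $L_0 g$, $L_0 g'$ equivalent when the minimal vectors of $L_0 g$, transported to $L_0$ by $g$, coincide with those of $L_0 g'$ transported by $g'$; the cells are the connected components of the equivalence classes. A cell $\sigma$ is carved out of $X$ by the finitely many equalities $\varphi^L(v)\langle v,v\rangle = \varphi^L(w)\langle w,w\rangle$ as $v,w$ range over a fixed finite vector configuration, together with the inequalities $\varphi^L(x)\langle x,x\rangle \geqslant m(L)$ for the remaining $x$; these are linear conditions on the form $\langle\,,\,\rangle$, so each closed cell is a convex-type cell, and allowing further vectors to become minimal describes its faces, giving a regular CW complex. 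Only finitely many vectors can be short near a given lattice, so the complex is locally finite. Since $\varphi$ is $\Gamma$-invariant (indeed $\Gamma_0$-invariant), $\Gamma$ permutes cells; the stabilizer of $\sigma$ permutes its finite associated vector configuration and hence is finite.

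Compactness of $W$ is reduction theory: being well rounded with minimum~$1$ forces every lattice in $\widehat W$ to contain a spanning set of vectors of bounded length, so by Mahler's criterion $\widehat W$ meets only a compact part of $Y$, whence $W = \widehat W/K$ is compact. For the dimension, a top cell occurs when the minimal vectors are in general position; counting the parameters in $\langle\,,\,\rangle$ that remain after imposing equality of weighted length on a spanning subset of $M(L)$ gives $\dim(Y/K) - n = \vcd\Gamma$. Finally, passing to the barycentric subdivision turns this regular CW complex into a simplicial complex on which each cell stabilizer acts by permuting vertices; the quotient is then a genuine finite simplicial complex, compatible with the ramified covering $\widetilde W \to W$ as in the footnote to the statement.
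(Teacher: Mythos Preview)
This theorem is cited from \cite{Ash84} rather than proved in the present paper, but the paper does summarize Ash's argument (Sections~\ref{subsec:wrrpf} and~\ref{wtccellpf}), and your sketch matches it: the retraction shrinks the $\langle\,,\,\rangle$-orthogonal complement of $S\cdot M(L)$ stage by stage (your $e^{-2t}$ scaling is the paper's $F_\lambda$ with $\lambda$ decreasing from~$1$ to~$\mu(L)$, the stages strung together over $j=1,\dots,n$), and the cells of~$\widetilde W$ are cut out by the linear system $\varphi(x)Z[x]=1$ for $x\in M$, $\varphi(x)Z[x]>1$ otherwise, in the Hermitian-matrix coordinates on~$X$. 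You also correctly identify continuity at the loci where $M(L)$ jumps as the main technical obstacle.
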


\begin{definition}
  $W = \widehat{W}/K$ is the \emph{well-rounded retract}.
\end{definition}


\section{Varying the Weights}
\label{sec:varywts}

\subsection{Statement of Results}
In this paper, we start with the trivial bundle $Y \times I$ over an
interval~$I$.  The value $\tau\in I$ is the \emph{temperament}.  $G$
acts on $Y \times I$ by acting on each fiber $Y\times\{\tau\}$ in the
obvious way.  There is a bundle isomorphism $(Y\times I)/K \cong
(Y/K)\times I$ with fibers $Y/K$.

A \emph{one-parameter family of weights} for~$L_0$ is a map
$\varphi_\tau : (L_0 - \{0\}) \times I \to \R^+$ which is a
$\Gamma$-invariant set of weights for any given~$\tau$, and such that
$\varphi_\tau(x)$ is real analytic in~$\tau$ for any given~$x$.  We
divide through by a positive real scalar, which depends continuously
on~$\tau$, so that the maximum of $\varphi_\tau$ is~1 for all~$\tau$.
Why we use real analytic functions will appear during the proof of
Theorem~\ref{wtccont}.

A one-parameter family of weights $\varphi_\tau$ determines
$\varphi^L_\tau$, $m_\tau(L)$, $M_\tau(L)$, $\widehat{W}_\tau$, and
$W_\tau = \widehat{W}_\tau/K$ for any given~$\tau$. By
Theorem~\ref{wrrmain}, there is a strong deformation retraction
$R_\tau(t)$ of the fiber over~$\tau$ onto $W_\tau$.  This means
$R_\tau(t) : ((Y \times \{\tau\})/K) \times [0,1] \to (Y \times
\{\tau\})/K$ is continuous, $R_\tau(0)$ is the identity on the fiber
$(Y \times \{\tau\})/K$, the image of $R_\tau(1)$ is $W_\tau$, and
$R_\tau$ fixes $W_\tau$ pointwise for all~$t$.  Our next theorem is that
$R_\tau(t)$ is also continuous in~$\tau$.

\begin{theorem} \label{wtccont}
  $R_\tau(t)$ is a continuous function $((Y\times I)/K) \times [0,1]
  \to (Y\times I)/K$.
\end{theorem}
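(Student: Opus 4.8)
The plan is to show that the deformation retraction $R_\tau(t)$ constructed fiberwise in Theorem~\ref{wrrmain} actually has a formula that depends continuously (indeed, real-analytically in the relevant variables) on $\tau$, so that it glues to a single continuous map on the total space. The key point is that Ash's retraction in \cite{Ash84} is not an arbitrary retraction pulled from an abstract existence theorem: it is defined geometrically in terms of the arithmetic minimum $m_\tau(L)$ and the minimal vectors $M_\tau(L)$. So I would begin by recalling the explicit construction of $R_\tau(t)$ from \cite{Ash84}. Roughly: given $L = L_0 g$ not well rounded, one moves in $X$ in the direction that increases the ``roundness,'' e.g.\ by flowing along a path determined by the convex geometry of the set of minimal vectors and the supporting form. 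The retraction time $t$ parametrizes this flow, and one stops when the lattice becomes well rounded. I would make this construction precise enough to read off what data it depends on.

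Next I would isolate the two places where $\tau$ enters: first through the weighted inner product $x \mapsto \varphi^L_\tau(x)\langle x,x\rangle$ that defines $m_\tau$ and $M_\tau$; and second, implicitly, through any normalizing scalar we divided out (which we assumed depends continuously on $\tau$). The function $(g,\tau) \mapsto \varphi^L_\tau(x)\langle x,x\rangle$ is real analytic in $\tau$ for each fixed $x$ by hypothesis, and smooth in $g$; and the minimum over $x \in L - \{0\}$ is a minimum of countably many such functions, only finitely many of which are relevant near any given $(g,\tau)$ because the lattice is discrete and the weighted lengths go to infinity. So $m_\tau(L)$ is continuous in $(g,\tau)$, and the \emph{set} of minimal (or near-minimal) vectors is upper semicontinuous and locally eventually constant in a suitable sense. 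From this one argues that the geometric data feeding the flow---the supporting hyperplane, the direction of steepest increase of roundness, etc.---vary continuously, hence the flow and its time-to-well-roundedness vary continuously. Finally, since $R_\tau$ fixes $W_\tau$ pointwise and the $W_\tau$ themselves move continuously (they are the level-$1$ well-rounded loci for a continuously varying form), the pieced-together map $((Y\times I)/K)\times[0,1] \to (Y\times I)/K$ is continuous; $K$-invariance is inherited from $K$-invariance of $\langle\,,\,\rangle$ and of $\varphi^L_\tau$, as already noted after Definition~\ref{arithmin}.

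The main obstacle I anticipate is the behavior at points where the combinatorial type of the situation jumps---where a vector enters or leaves the minimal set $M_\tau(L)$, or where the cell containing $R_\tau(t)(L)$ changes. These are exactly the ``critical temperaments'' the introduction warns about, and naively the flow direction is defined by a case distinction that is discontinuous as a function on the parameter space. The resolution is that although the \emph{description} of the flow changes, the flow \emph{itself} is continuous across such loci: the extra vectors that join $M_\tau$ do so by having their weighted length decrease continuously to the minimum, so the constraints they impose on the flow direction switch on continuously rather than abruptly. Making this rigorous is where the real-analyticity hypothesis earns its keep: real-analytic functions of $\tau$ cannot oscillate, so for each pair of lines $Dx, Dy$ the sign of $\varphi^L_\tau(x)\langle x,x\rangle - \varphi^L_\tau(y)\langle y,y\rangle$ is constant except at isolated $\tau$, which lets us stratify $I$ into finitely many pieces on which the combinatorics is constant and check continuity at the (finitely many) walls by a direct limiting argument. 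I would present the argument by first treating a single fiber carefully, then bolting on the $\tau$-dependence as a perturbation controlled by these semicontinuity and analyticity facts.

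One technical point worth flagging in the write-up: continuity need only be checked locally on $(Y\times I)/K$, and locally $Y = \Gamma\bs G$ looks like an open set in $G$, so we may work upstairs in $G\times I$ and verify continuity of the lifted retraction there, equivariance then giving the statement downstairs. This avoids any awkwardness with the $\Gamma$-action and the orbifold structure on $W$.
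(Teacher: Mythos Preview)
Your plan is correct and matches the paper's proof closely: the paper also recalls Ash's explicit retraction (built as a composite $r_\tau^{(n)} \circ \cdots \circ r_\tau^{(1)}$, where $r_\tau^{(j)}$ shrinks the orthogonal complement of $S\cdot M_\tau(L)$ via $F_\lambda(p+q)=p+\lambda q$ until a new independent minimal vector appears at $\lambda=\mu_\tau(L)$), then checks continuity in $\tau$ by a two-case analysis according to whether $\dim_D D\,M_{\tau'}(L)$ is locally constant or drops on a punctured half-interval, using real analyticity exactly as you anticipate to get the upper-semicontinuity. One sharpening worth making when you write it up: in the ``jump'' case your heuristic (``constraints switch on continuously'') is right in spirit, but the precise mechanism is that the earlier-stage retractions $r_{\tau'}^{(j')}$ (for $j'<j$) that now act on $L$ have $\mu_{\tau'}(L)\to 1$ as $\tau'\to\tau$, so they move $L$ by an amount tending to zero---that is the concrete limiting statement you need, and it follows because the ``new'' minimal vectors appearing at $\lambda=\mu_{\tau'}(L)$ are already in $M_\tau(L)$.
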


\begin{corollary}
  $\{(w \times \tau)/K \mid \tau \in I, w \in \widehat{W}_\tau\}$ is a
  strong deformation retract of $(Y \times I)/K$.  It has dimension
  $\vcd \Gamma$.  It is compact if~$I$ is compact.  The map from the
  retract to~$I$ is a fibration.
\end{corollary}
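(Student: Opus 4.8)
The plan is to derive each assertion from Theorem~\ref{wtccont} together with Theorem~\ref{wrrmain}. Write $R : ((Y\times I)/K)\times[0,1]\to (Y\times I)/K$ for the map $R_\tau(t)$, which is continuous by Theorem~\ref{wtccont}. First I would observe that $R$ preserves fibers over $I$: for each $\tau$, $R_\tau(t)$ maps the fiber $(Y\times\{\tau\})/K$ into itself, so the composite of $R$ with the projection to $I$ is just the projection of the source to $I$, independent of $t$. Set $\widetilde{\mathcal W} = \{(w\times\tau)/K \mid \tau\in I,\ w\in\widehat W_\tau\}$; this is exactly the image of $R(\cdot,1)$, since the image of $R_\tau(1)$ is $W_\tau$ in each fiber. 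Now the four claims:

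\textbf{Strong deformation retract.} By construction $R(\cdot,0)$ is the identity, the image of $R(\cdot,1)$ is $\widetilde{\mathcal W}$, and for each $\tau$ the homotopy $R_\tau$ fixes $W_\tau$ pointwise for all $t$; since $\widetilde{\mathcal W}\cap\big((Y\times\{\tau\})/K\big) = W_\tau$, the homotopy $R$ fixes $\widetilde{\mathcal W}$ pointwise. Together with continuity of $R$ (Theorem~\ref{wtccont}) this is precisely the statement that $\widetilde{\mathcal W}$ is a strong deformation retract of $(Y\times I)/K$.

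\textbf{Fibration and dimension.} The map $p : \widetilde{\mathcal W}\to I$ is the restriction of the projection $(Y\times I)/K\cong (Y/K)\times I\to I$; its fiber over $\tau$ is $W_\tau$, which by Theorem~\ref{wrrmain} is a compact cell complex of dimension $\vcd\Gamma$. To see $p$ is a fibration, I would use that the one-parameter family of weights is real analytic in $\tau$, so that locally on $I$ the retracts $W_\tau$ vary through a uniform combinatorial model — this is the content that the proof of Theorem~\ref{wtccont} already establishes (the deformation $R$ restricted to a neighborhood of a point of $\widetilde{\mathcal W}$ gives the required local trivialization, or at least a homotopy equivalence of fibers, which upgrades to the homotopy-lifting property). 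Since every fiber has dimension $\vcd\Gamma$ and $I$ is one-dimensional, $\dim\widetilde{\mathcal W} = \vcd\Gamma$ (the total space of a fibration with $1$-dimensional base and $\vcd\Gamma$-dimensional fiber); alternatively, $\widetilde{\mathcal W}$ is a deformation retract of $(Y\times I)/K$, whose cohomological dimension is that of $Y/K$, and one checks the cell dimension directly from the fibration structure.

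\textbf{Compactness.} If $I$ is compact, then $\widetilde{\mathcal W}\subset (Y/K)\times I$ is the continuous image $R(\cdot,1)$ of... no — rather, $\widetilde{\mathcal W}$ is closed in $(Y/K)\times I$ (it is the fixed-point set of $R(\cdot,1)$, or the equalizer of $R(\cdot,1)$ and the identity, hence closed by continuity) and it maps properly onto the compact $I$ with compact fibers $W_\tau$; a closed subset of $(Y/K)\times I$ that is proper over compact $I$ with compact fibers is compact.

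\textbf{Main obstacle.} The routine points are the strong-deformation-retract claim and compactness. The genuinely delicate step is establishing that $p:\widetilde{\mathcal W}\to I$ is a \emph{fibration} rather than merely a family of homotopy equivalent fibers: one must produce compatible local trivializations, and the cell structure of $W_\tau$ can jump at the critical temperaments mentioned in the introduction. I expect this to be handled exactly as in the proof of Theorem~\ref{wtccont} — the real-analyticity of $\varphi_\tau$ forces the critical temperaments to be isolated, and between them the combinatorics is locally constant, while across a critical value the continuous deformation $R$ supplies the gluing data needed for the homotopy-lifting property. So the corollary's fibration claim is essentially a repackaging of Theorem~\ref{wtccont}, and the proof should be short, citing that theorem for the hard analytic input.
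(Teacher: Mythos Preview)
Your overall strategy---derive everything from Theorems~\ref{wtccont} and~\ref{wrrmain}---is exactly what the paper intends (it gives no separate proof; the Corollary is stated as an immediate consequence). The strong-deformation-retract and compactness arguments are fine. Two points need correction.

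\textbf{Fibration.} You reach for critical temperaments, local constancy of the cell structure, and real-analyticity. That material is developed only later in Section~\ref{sec:wtc} and is not needed here. The one-line argument is: $(Y/K)\times I\to I$ is a trivial fibration, and by Theorem~\ref{wtccont} the map $R(\cdot,1)$ is a continuous fiber-preserving retraction of it onto $\widetilde{\mathcal W}$ which is the identity on $\widetilde{\mathcal W}$. Given a lifting problem for $p:\widetilde{\mathcal W}\to I$, lift in the trivial bundle and then apply $R(\cdot,1)$; the result lands in $\widetilde{\mathcal W}$, still covers the given homotopy in $I$ (fiber-preserving), and has the correct initial value (identity on $\widetilde{\mathcal W}$). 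So $p$ has the homotopy lifting property with no appeal to cell structure.

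\textbf{Dimension.} Your sentence ``the total space of a fibration with $1$-dimensional base and $\vcd\Gamma$-dimensional fiber'' has dimension $\vcd\Gamma$ is an arithmetic slip: that total space has dimension $\vcd\Gamma+1$. Indeed the paper says elsewhere that $\dim\widetilde W^+=\dim\widetilde W+1$ (Introduction) and that $\widetilde W_{[u^{(i-1)},u^{(i)}]}$ ``has dimension one higher than the vcd'' (Section~\ref{sec:hos}). So the ``$\vcd\Gamma$'' in the Corollary is evidently meant to be $\vcd\Gamma+1$; either way, your stated reasoning does not establish the claimed value, and the alternative you offer via cohomological dimension of $(Y\times I)/K$ does not control the \emph{cell} dimension of the retract.
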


The proof of Theorem~\ref{wtccont} is an extension of the proof of
Theorem~\ref{wrrmain}.  In Section~\ref{subsec:wrrpf}, we summarize
notation and facts from the proof of Theorem~\ref{wrrmain}
in~\cite{Ash84}.  In Section~\ref{subsec:wtcpf} we prove
Theorem~\ref{wtccont}.

\subsection{The Well-Rounded Retraction} \label{subsec:wrrpf}
First, we recall the definition of the inner product
$\langle\,,\,\rangle$.  Let~$v$ run through the archimedean places of
the center of~$D$.  Let~$D_v$ be the corresponding completion; we
identify~$D_v$ with an algebra of square matrices over~$\R$, $\C$,
or~$\H$, as appropriate.  For $x\in D_v$, let $x^*$ be the conjugate
transpose, where \emph{conjugate} means the identity on~$\R$ and the
usual conjugate on~$\C$ or~$\H$.  By $\underline{x}$ we denote a row
vector.  Define the positive definite inner product
$\langle\,,\,\rangle_v$ on $D_v^n$ by $\langle \underline{x},
\underline{y} \rangle_v = \sum_{i=1}^n \Tr^{D_v}_\R (x_i^* y_i)$.  We
have $S = \prod_v D_v$ and $G = \prod_v G_v$, where $G_v =
\GL_n(D_v)$.  Define $\langle\,,\,\rangle$ on $S^n = \prod_v D_v^n$ by
$\langle (\underline{x}_v), (\underline{y}_v) \rangle = \sum_v \langle
\underline{x}_v, \underline{y}_v \rangle_v$.  The left action of~$S$
on~$S^n$ is stable under adjoint of $\R$-linear maps.  $K$ is the
subgroup of~$G$ preserving $\langle\,,\,\rangle$.  Observe that $K =
\prod_v K_v$, where $K_v$ is the maximal compact subgroup of $G_v$
preserving $\langle\,,\,\rangle_v$.  Also, $X = \prod_v X_v$ where
$X_v = G_v/K_v$.

Throughout this Section~\ref{subsec:wrrpf}, $\tau \in I$ is fixed.
For $j=1,\dots,n$, define
\[
Y_\tau^{(j)} = \{L \in Y_\tau \mid \dim_D (D\,M_\tau(L)) \geqslant j
\mathrm{\ and\ } m_\tau(L) = 1\}.
\]
Set $Y_\tau^{(0)} = Y$.  Then $Y_\tau^{(0)} \supset Y_\tau^{(1)}
\supset Y_\tau^{(2)} \supset\cdots$ is a nested sequence of subspaces
in the fiber~$Y$ over~$\tau$.  By definition, $\widehat{W}_\tau =
Y_\tau^{(n)}$.  There is a $K$-equivariant strong deformation
retraction $r_\tau^{(j)}(t)$ from $Y_\tau^{(j-1)}$ to $Y_\tau^{(j)}$
for $t\in[0,1]$, whose definition we give below at~\eqref{defrjtau}.
To obtain $R_\tau(t)$, we string together the $r_\tau^{(j)}(t)$ for
$j=1,\dots,n$, rescaling the~$n$ consecutive~$t$s into $[0,1]$, while
modding out by~$K$.  We write $r_\tau^{(j)}(t)$ as $r_\tau^{(j)}(L,t)$
when we need to specify~$L$.

The action of the positive real homotheties gives~$r_\tau^{(1)}$.
Suppose $j\geqslant 2$ and $L\in Y_\tau^{(j-1)}$.  We now define
$r_\tau^{(j)}(L,t)$.  If $L$ is in $Y_\tau^{(j)}$ already, set
$r_\tau^{(j)}(L,t) = L$ for all~$t$.  Otherwise, let $P = S\cdot
M_\tau(L)$, an $S$-submodule of~$S^n$.  Let~$Q$ be the orthogonal
complement of~$P$ in~$S^n$ with respect to~$\langle\,,\,\rangle$.  By
\cite[Lemma~2.6]{Ash84}, $Q$ is also an $S$-submodule, $S^n = P \oplus
Q$, and if~$\lambda$ is a non-zero real number then the map
$F_\lambda(p+q) = p+\lambda q$ (for $p\in P$, $q\in Q$) is given by
right multiplication by some matrix $F_\lambda \in G$.  The heart of
the argument is to prove that $\mu_\tau(L) = \sup\{\lambda \mid
m_\tau(L\,F_\lambda) < 1\}$ exists and satisfies $0 < \mu_\tau(L) <
1$, with $m_\tau(L\cdot F_\lambda) = 1$ at $\lambda = \mu_\tau(L)$.
(Intuitively, we shrink~$\lambda$ down from~1 through positive values
until $L\,F_\lambda$ gains at least one new independent minimal
vector; this new vector appears at $\lambda = \mu_\tau(L)$.)  Define
\begin{equation} \label{defrjtau}
r_\tau^{(j)}(L,t) =
\left\{
\begin{array}{cl}
  L\,F_{(1 + (\mu_\tau(L) - 1)t)} &
  \quad\mathrm{if\ } L \in Y_\tau^{(j-1)} - Y_\tau^{(j)} \\
  L &
  \quad\mathrm{if\ } L \in Y_\tau^{(j)}.
\end{array}
\right.
\end{equation}
This is $K$-invariant, and~\cite{Ash84} proves it is continuous in~$L$
and~$t$.

\subsection{Proof of Theorem~\ref{wtccont}}  \label{subsec:wtcpf}
Let $Y^{(j)} \subseteq Y\times I$ be the union of the $Y_\tau^{(j)}$
for $\tau\in I$.  Let $L\in Y^{(j-1)}$.  We must show
$r_{\tau'}^{(j)}(L,t)$ is continuous as~$\tau'\in I$ varies in a
neighborhood of~$\tau$.  We will show it is continuous for both $\tau'
\in (\tau-\varepsilon, \tau]$ and $\tau' \in [\tau, \tau+\varepsilon)$
    for small $\varepsilon > 0$.  We will call either
    $(\tau-\varepsilon, \tau]$ or $[\tau, \tau+\varepsilon)$ a
    \emph{half-interval around}~$\tau$.  A \emph{punctured
      half-interval} is a half-interval around~$\tau$ with~$\tau$
    removed.

As above, the action of the positive real homotheties gives the
retraction $r^{(1)}$.  This is continuous as a function of~$\tau$
because $\varphi_\tau$ is continuous in~$\tau$ and all the weights are
positive.  From now on, let $j\geqslant 2$.

The dimension $\dim_D (D\,M_\tau(L))$ may or may not be constant on
small half-intervals.  If there is no small half-interval on a given
side on which it is constant, it nevertheless varies upper
semi-continuously: the dimension has a certain value at~$\tau$, and
has a constant but smaller value on a small punctured half-interval.
The reason for the upper-semicontinuity is that $\dim_S
(S\,M_\tau(L))$ is the number of linearly independent vectors $x\in L$
with
\begin{equation} \label{usc}
\varphi_\tau^L(x) \langle x,x \rangle - m_\tau(L) = 0.
\end{equation}
For a given~$x$, \eqref{usc} is an upper semi-continuous condition,
because $\varphi_\tau^L(x)$ is a real analytic positive function
of~$\tau$, $m_\tau(L)$ is a min of real analytic functions of~$\tau$,
and $\langle x, x \rangle$ is a positive constant independent
of~$\tau$.  If the left side of~\eqref{usc} is~$\ne 0$, then it will
remain $\ne 0$ on a small half-interval around~$\tau$, by continuity.
But if it is~$0$, then---by analyticity---it will either remain~$0$ on
a small half-interval around~$\tau$, or it will be~$\ne 0$ throughout
a small punctured half-interval around~$\tau$.

Showing $r_\tau^{(j)}(L,t)$ is continuous in~$\tau'$ breaks into two
cases.  In the first case, $\dim_S P$ is constant for~$\tau'$ in a
small half-interval around~$\tau$.  This is equivalent to saying $L
\in Y_{\tau'}^{(j-1)}$ for all~$\tau'$ in the neighborhood.  Then~$P$
and $Q$ vary continuously with $\tau'$ as $S$-submodules of~$S^n$,
hence so does $L\,F_\lambda$.  The minimum $m_{\tau'}$ varies
continuously with~$\tau'$, so $\mu_{\tau'}(L)$ varies continuously
with~$\tau'$ and remains bounded away from~1 on the half-interval.
Hence $r_\tau^{(j)}(L,t)$ varies continuously with~$\tau'$ throughout
the half-interval.

In the second case, $\dim_S (S\,M_{\tau'}(L))$ is upper
semi-continuous.  Here $L \notin Y_{\tau'}^{(j-1)}$ for $\tau'$ on a
small punctured half-interval around~$\tau$; rather, there is a $j' <
j$ so that $L \in Y_{\tau'}^{(j'-1)}$ for $\tau'$ in that punctured
half-interval.  The retraction $r^{(j)}$ will not be operating on $L$
when $\tau'\ne\tau$, because $L \notin Y_{\tau'}^{(j-1)}$; rather, it
will be operating on the image~$L'$ of~$L$ under $r^{(j')}$,
$r^{(j'+1)}, \dots, r^{(j-1)}$.  We must show that~$L'$ is arbitrarily
close to~$L$ when~$\tau'$ is arbitrarily close to~$\tau$.  Consider
the function~$\mu$ used in~\eqref{defrjtau} above.  When $\tau'$ is
very close to~$\tau$, the new minimal vectors that will appear when
$\lambda = \mu_{\tau'}(L)$ are a subset of those that are already in
$M_\tau(L)$.  This means that $\mu_{\tau'}(L) \to 1$ as $\tau' \to
\tau$.  In~\eqref{defrjtau}, this means $L\,F_{(1 + (\mu_\tau(L) -
  1)t)}$ converges to~$L$ as $\tau' \to \tau$.  The convergence is
uniform as~$L$ and~$t$ vary in small compact neighborhoods.  This
proves the continuity of $r_\tau^{(j)}(L,t)$.


\section{The Well-Tempered Complex}
\label{sec:wtc}

\subsection{Hecke Correspondences}
\label{subsec:defhecke}
We review Hecke correspondences, following \cite[\S3.1 and p.~76]{Sh}.
Subgroups~$\Gamma_1$ and~$\Gamma_2$ of~$G$ are \emph{commensurable} if
$\Gamma_1\cap\Gamma_2$ has finite index in both~$\Gamma_1$
and~$\Gamma_2$.  Commensurability is an equivalence relation.  The
\emph{commensurator} of~$\Gamma_1$ is $\breve\Gamma_1 = \{a\in G \mid
a^{-1} \Gamma_1 a$ is commensurable with~$\Gamma_1\}$.  If~$\Gamma_1$
and~$\Gamma_2$ are commensurable, then $\breve\Gamma_1 =
\breve\Gamma_2$.

Define $\Delta = \{a \in G \mid L_0 a \subseteq L_0\}$.  This is a
sub-semigroup of~$G$ containing~$\Gamma_0$.  If $a\in\Delta$, then
$M_0 = L_0 a$ is a sublattice of~$L_0$ of finite index.  Thus
$\Gamma_0 \subset \Delta \subset \breve\Gamma_0$.  The arithmetic
group $\Gamma = \Gamma_0 \cap a^{-1} \Gamma_0 a$ is the common
stabilizer in~$G$ of $L_0$ and $M_0$.  One calls $(\Gamma_0, \Delta)$
a \emph{Hecke pair}.

A point in $\Gamma_0\bs X$ has the form $\Gamma_0 g K$ with $g\in G$.
Define two maps
\begin{equation} \label{twomaps}
\xymatrix{
  \Gamma\bs X \ar@/_/[d]_p \ar@/^/[d]^q \\
  \Gamma_0\bs X
}
\end{equation}
by $p : \Gamma g K \mapsto \Gamma_0 g K$ and $q : \Gamma g K \mapsto
\Gamma_0 a g K$.  The map~$q$ is well defined: $ (\forall\,\gamma \in
\Gamma) \, \Gamma_0 a g = \Gamma_0 a\gamma g \mathrm{\ }$ is true if
and only if $\Gamma_0 = \Gamma_0 a\gamma a^{-1}$, and the latter is
implied by $a\gamma a^{-1} \in a(\Gamma_0 \cap a^{-1} \Gamma_0 a)
a^{-1} = a \Gamma_0 a^{-1} \cap \Gamma_0 \subseteq \Gamma_0$.

The \emph{Hecke correspondence} $T_a$ is the one-to-many map
$\Gamma_0\bs X \to \Gamma_0\bs X$ given by
\[
T_a = q \circ p^{-1}.
\]
It sends one point of $\Gamma_0\bs X$ to $[\Gamma_0 : \Gamma]$ points
of $\Gamma_0\bs X$, counting multiplicities.

The \emph{Hecke algebra} for the Hecke pair $(\Gamma_0, \Delta)$ is
the free abelian group on the set of correspondences~$T_a$ for
$a\in\Delta$, with multiplication defined by the composition of the
correspondences.  This is equivalent to the traditional definition as
the algebra of double cosets $\Gamma_0 a\Gamma_0$ for $a\in\Delta$
\cite[p.~54]{Sh}.  The Hecke algebra is commutative.

\subsubsection{Examples}
\label{subsubsec:defhecke1}

Let $D=\Q$, and let~$L_0$ be the standard lattice $\Z^n$.  Then
$\Gamma_0 = \GL_n(\Z)$.  All lines in $\mathbb{P}^n(\Q)$ are
equivalent mod~$\Gamma_0$, so the constant function $\varphi=1$ is the
only set of weights for~$\Gamma_0$.  Here~$\Delta$ is the semigroup of
all $n\times n$ matrices with entries in~$\Z$ and non-zero
determinant.  $\breve\Gamma_0 = \GL_n(\Q)$.  For a prime $\ell\in\Z$
and $k\in\{1,\dots,n\}$, define
\[
T_{\ell,k} = T_a \quad\mathrm{for\ } a =
\mathrm{diag}(\underbrace{1,\dots,1}_{n-k\mathrm{\ times}},
\underbrace{\ell,\dots,\ell}_{k\mathrm{\ times}}).
\]
The Hecke algebra is generated by the $T_{\ell,k}$ for all
primes~$\ell$ and $k\in\{1,\dots,n\}$ \cite[\S3.2]{Sh}.


When $D$ is an algebraic number field of class number one, the Hecke
algebra is defined in a similar way.  The ring of integers
$\mathcal{O}_D$ replaces~$\Z$.

When~$D$ is a number field of class number greater than one, what we
have defined is a subalgebra of the full Hecke algebra, namely the
subalgebra consisting of those correspondences that come from the
principal ideal classes.  The full Hecke algebra is defined over the
adeles; see, for example, \cite[p.~47]{Gel}.

\subsection{Definition of the Well-Tempered Complex}

Fix a set of weights~$\varphi$ for~$\Gamma_0$.  The lattice~$L_0$,
and~$\varphi$, determine the well-rounded retract for~$\Gamma_0$ as in
Section~\ref{sec:wrr}.  Fix $a\in\Delta$, and let $\Gamma = \Gamma_0
\cap a^{-1} \Gamma_0 a$ as in Section~\ref{subsec:defhecke}.  The
well-tempered complex~$W^+$ will be determined by~$L_0$, $\varphi$,
and~$a$, and will naturally have an action of~$\Gamma$.

We impose an additional hypothesis on~$\varphi$ making~$\varphi$
compatible with~$a$:
\begin{equation} \label{phi_a}
  \varphi(xa) = \varphi(x) \qquad\mathrm{for\ all\ } x \in L_0 -
  \{0\}.
\end{equation}
The hypothesis is satisfied, for instance, if~$\varphi$ is the
constant function~$1$.

Let $M_0 = L_0 a$.  For the rest of the paper, we extend the set of
weights~$\varphi$ for~$\Gamma_0$ to a set of weights~$\varphi_\tau$
for~$\Gamma$ in a particular way.

\begin{definition} \label{deftausq}
  For $x \in L_0 - \{0\}$ and $\tau \geqslant 1$, define
\[
\varphi_\tau(x) = \left\{
\begin{array}{cl}
  \varphi(x) & \quad\mathrm{if\ }x \in M_0 - \{0\}, \\
  \tau^2\varphi(x) & \quad\mathrm{if\ }x \notin M_0.
\end{array} \right.
\]
\end{definition}

\begin{remark}
The idea here comes from $m(L)$ in Definition~\ref{arithmin}.  The
weighted squared length of a vector $x \in L$ is $\varphi^L(x) \langle
x, x \rangle$.  The squared length $\langle x, x \rangle$ scales by
$c^2$ when we multiply~$x$ by $c\in\R$.  By multiplying the weight
by~$\tau^2$ when $x \notin M_0$, we mimic the effect of scaling the
length of~$x$ linearly by~$\tau$.  We pretend $x\notin M_0$ gets
longer by lies, linearly.  When $x\in M_0$, we do not pretend to
change the length.
\end{remark}

Choose $\tau_0 > 1$, and let $I = [1, \tau_0]$.  The well-tempered
complex
depends on~$\tau_0$, but we will see that the complexes for two
different~$\tau_0$ are isomorphic when~$\tau_0$ is sufficiently large.
In Theorem~\ref{wtctau0} below, we will derive a particular $\tau_0 >
1$ for computing Hecke operators.

\begin{definition}
  The \emph{well-tempered complex}~$W^+$ for~$L_0$, $\varphi$, and~$a$
  is the image of $(Y \times [1,\tau_0])/K$ under the retraction
  $R_\tau(t)$ of Theorem~\ref{wtccont}, where~$\varphi$
  satisfies~\eqref{phi_a} and where~$\varphi_\tau$ is as in
  Definition~\ref{deftausq}.
\end{definition}

\subsection{Cells in the Well-Tempered Complex}
\label{wtccellpf}

\begin{theorem} \label{wtccell}
  The universal cover~$\widetilde{W}^+$ of the well-tempered complex~$W^+$
  is a locally finite regular cell complex on which $\Gamma$ acts
  cell-wise with finite stabilizers of cells.  This cell structure has
  a natural barycentric subdivision which descends to a finite cell
  complex structure on~$W^+$.
\end{theorem}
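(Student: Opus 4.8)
The plan is to reduce Theorem~\ref{wtccell} to Theorem~\ref{wrrmain} (Ash's theorem) applied to an auxiliary arithmetic group, together with the fibration structure from Theorem~\ref{wtccont}. The key observation is that the one-parameter family $\varphi_\tau$ of Definition~\ref{deftausq} is, for each fixed $\tau$, a genuine $\Gamma$-invariant set of weights for $\Gamma$ (not just for $\Gamma_0$), since the condition $x\in M_0$ versus $x\notin M_0$ is preserved by $\Gamma = \Gamma_0\cap a^{-1}\Gamma_0 a$, which stabilizes both $L_0$ and $M_0$. Moreover $\varphi_\tau(x)$ is real analytic (indeed polynomial) in $\tau$, so $\varphi_\tau$ is a one-parameter family of weights in the sense of Section~\ref{sec:varywts}. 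Hence Theorem~\ref{wtccont} and its Corollary apply verbatim: the retract $W^+$ is a strong deformation retract of $(Y\times I)/K$, it has dimension $\vcd\Gamma$, it is compact since $I=[1,\tau_0]$ is compact, and the map $W^+\to I$ is a fibration whose fiber over $\tau$ is the well-rounded retract $W_\tau$ for the weights $\varphi_\tau$.

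First I would record that each fiber $W_\tau$, being the well-rounded retract for the arithmetic group $\Gamma$ with weights $\varphi_\tau$, carries by Theorem~\ref{wrrmain} the structure of a locally finite regular cell complex on which $\Gamma$ acts cell-wise with finite stabilizers; Ash's construction of these cells is via the minimal vectors $M_\tau(L)$, the cell through $L$ being determined by the combinatorial type of the weighted Voronoi/perfect-form data. Next I would promote this to a cell structure on the total space $\widetilde{W}^+$. The cells of $\widetilde{W}^+$ should be indexed by the locally constant combinatorial data of $M_\tau(L)$ as $(L,\tau)$ varies: the analysis in Section~\ref{subsec:wtcpf} shows that $\dim_S(S\,M_\tau(L))$ and, more finely, the set of minimal vectors vary locally constantly in $\tau$ except at a locally finite set of \emph{critical temperaments}, where by real analyticity the behavior is controlled (vectors enter or leave $M_\tau(L)$ at isolated $\tau$). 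Thus $\widetilde{W}^+$ is stratified by strata of two kinds — those lying over an open $\tau$-interval on which the fiber cell type is constant (contributing cells of dimension $\dim(\text{fiber cell})+1$), and those lying over a critical temperament (contributing cells of dimension $\dim(\text{fiber cell})$) — and I would check that these strata are the open cells of a regular CW structure, i.e. each has a characteristic map from a closed ball restricting to a homeomorphism on the interior, with attaching maps landing in lower-dimensional skeleta. Local finiteness and finiteness of cell stabilizers are inherited from the fibers since $I$ is compact and the critical temperaments are finite in number. Finally, taking the barycentric subdivision of this CW structure on $\widetilde{W}^+$ gives a simplicial complex compatible with the $\Gamma$-action and its finite stabilizers, hence descends to a finite simplicial complex on $W^+ = \widetilde{W}^+/\Gamma$, exactly as in the last sentence of Theorem~\ref{wrrmain}.

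The main obstacle is the behavior at the critical temperaments: away from them, $\widetilde{W}^+$ is just (a piece of) $\widetilde{W}_\tau\times(\text{interval})$ and the product cell structure works, but at a critical $\tau$ one must show that the abruptly-changing fiber cells fit together into honest cells of $\widetilde{W}^+$ — concretely, that a cell present for $\tau<\tau_{\mathrm{crit}}$ and the (possibly different) cell present for $\tau>\tau_{\mathrm{crit}}$ are the two "ends" of a single cell of one higher dimension whose attaching data includes the fiber cell sitting over $\tau_{\mathrm{crit}}$ itself. This is where the real analyticity of $\varphi_\tau$ is essential, and where one must invoke the detailed description of the cells in terms of the Hecketope (the weighted Voronoi polyhedron of Section~\ref{sec:hecketope}): the degeneration of the Hecketope's combinatorics at $\tau_{\mathrm{crit}}$ is a single polyhedral collapse, which is precisely what makes the union of fiber cells a regular cell. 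I would handle this by a local model near each critical temperament, showing the relevant piece of $\widetilde{W}^+$ is PL-homeomorphic to a standard mapping-cylinder-type neighborhood, and then cite the standard fact that such a space admits a regular CW structure refining the given stratification; the barycentric subdivision then repairs any failure of the face-poset to be a regular cell complex on the nose.
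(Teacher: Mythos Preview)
Your approach is genuinely different from the paper's, and the difference is instructive. You try to build the cell structure on $\widetilde{W}^+$ by gluing together the fiber cell structures $\widetilde{W}_\tau$ as $\tau$ varies, and you correctly identify the hard part: understanding how the fiber cells degenerate at critical temperaments and proving that the resulting stratification is a regular CW complex. The paper bypasses this entirely with a single change of variable. Writing $u = 1/\tau^2$, the defining system~\eqref{wtcsigmatau2} becomes \emph{linear} in the coordinates of $E\times\R$ (the $Z$-entries together with $u$): the right-hand sides are either $1$ or $u$, and $Z[x]$ is already linear in $Z$. So for each finite $M\subset L_0$ the solution set $\sigma(M)$ is a convex polyhedron in $E\times\R$, and Ash's original argument applies verbatim in one higher dimension. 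The cells of $\widetilde{W}^+$ are these polyhedra; no analysis of critical temperaments, no mapping cylinders, no PL local models are needed. In fact the paper only \emph{defines} the critical temperaments afterwards (Section~\ref{subsec:refining}), as a refinement of the cell structure already obtained, and their finiteness is a \emph{consequence} of Theorem~\ref{wtccell} rather than an input to it.

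This points to a genuine gap in your proposal. You assert that the critical temperaments are finite in number and then use this for local finiteness, but in your framework this would have to be proved first, and Section~\ref{subsec:wtcpf} does not do it (that section is about continuity of the retraction, not discreteness of the locus where the combinatorics jumps). More seriously, the ``local model'' you invoke at a critical temperament is exactly the content of the theorem: you would need to show that the union of nearby fiber cells is a closed ball, and the sketch via ``PL-homeomorphic to a mapping-cylinder-type neighborhood'' plus a ``standard fact'' is not a proof. The paper's linearization shows directly that this union is a convex polytope, which is what makes the argument work; absent that, you are left to analyze a one-parameter family of polytopes degenerating along a real-analytic path, which is doable but substantially harder than what the paper does.
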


\begin{proof}
  We summarize notation and facts from the proof of
  Theorem~\ref{wrrmain} in~\cite{Ash84}, at first with~$\tau$ fixed.
  For any ring~$R$, let $M_n(R)$ be the ring of $n\times n$ matrices
  over~$R$, and $(\dots)^t$ the transpose.  For $Z_v \in M_n(D_v)$,
  let $Z_v^*$ be the result of applying ${}^*$ to each matrix entry
  of~$Z_v$.  We say $Z_v \in M_n(D_v)$ is \emph{Hermitian} if $Z_v^t =
  Z_v^*$.  Let $E_v$ be the $\R$-vector space of Hermitian matrices in
  $M_n(D_v)$.  $Z_v$ is \emph{positive definite} if $\Tr^{D_v}_\R
  \underline{x} Z_v \underline{x}^* > 0$ for all $\underline{x} \in
  D_v^n - \{0\}$.  Identify $X_v = G_v/K_v$ with the open cone of
  positive definite matrices in~$E_v$, via $g K_v \leftrightarrow g
  (g^*)^t$ for $g\in G_v$.  Let $E = \prod_v E_v$, an $\R$-vector
  space, and identify points $X = \prod_v X_v$ with products $Z =
  \prod_v Z_v$ of positive definite matrices.  If $x\in L_0$ and $Z =
  g (g^*)^t$ for $g\in G$, the length $\langle xg, xg \rangle$ is
  denoted $Z[x]$.

As mentioned in the Introduction, $E$ puts a system of \emph{linear
  coordinates} on~$X$.  If $x \in L_0$ is fixed, then $Z[x]$ is an
$\R$-linear function of~$Z$.  The cells in our complexes will be
defined using this linear structure.

The definitions of $m(L)$, $M(L)$, and \emph{well rounded} from
Definition~\ref{arithmin} are equivalent to the following for $Z\in
X$:
\begin{gather}
  m_\tau(Z) = \min\{\varphi_\tau(x) Z[x] \mid x \in L_0 - \{0\}\}
  \label{wrrdefW} \\
  M_\tau(Z) = \{x \in L_0 \mid \varphi_\tau(x) Z[x] = m_\tau(Z)\} \notag \\
  Z \mathrm{\ is\ well\ rounded\ if\ } S\,M_\tau(Z) = S^n. \notag
\end{gather}
If~$\widetilde{W}_\tau$ is the set of well-rounded~$Z$ in~$X$ with
minimum~$m_\tau(Z) = 1$, then~$\widetilde{W}_\tau$ is the inverse
image of~$W_\tau$ under the covering map $X \to Y/K$; this is the
universal cover of~$W_\tau$.

A point $Z \in \widetilde{W}_\tau$ satisfies the linear equations and
inequalities
\begin{align*}
  \varphi_\tau(x) Z[x] = 1 &\quad\mathrm{if\ } x \in M_\tau(Z), \\
  \varphi_\tau(x) Z[x] \geqslant 1 &\quad\mathrm{if\ } x \in L_0 - \{0\}.
\end{align*}
For each finite subset $M \subseteq L_0$, we define $\sigma_\tau(M)$ to be
the set of all $Z\in X$ such that
\begin{align}
  \varphi_\tau(x)Z[x] = 1 &\quad\mathrm{for\ }x \in M \label{wtcsigma} \\
  \varphi_\tau(x)Z[x] > 1 &\quad\mathrm{for\ }x \in L_0 - (M \cup \{0\}). \notag
\end{align}
This is a system of linear equations and inequalities in the linear
coordinates of~$E$.  (Intuitively, it is a linear programming problem
in~$E$.)  Either $\sigma_\tau(M)$ is empty, or it is a convex
polyhedron, which is topologically a closed cell.  When the cell is
non-empty, we call~$M$ the set of \emph{minimal vectors} for
$\sigma_\tau(M)$. \cite{Ash84} shows that the non-empty
$\sigma_\tau(M)$ give~$\widetilde{W}_\tau$ the structure of a locally
finite regular cell complex on which $\Gamma_0$ acts cell-wise with
finite stabilizers of cells.  It shows that the cell structure has a
natural barycentric subdivision which descends to a finite cell
complex structure on~$W_\tau$.  An element $\gamma\in\Gamma_0$ acts by
$\gamma \sigma_\tau(M) = \sigma_\tau(M \gamma^{-1})$.

We now consider how these constructions vary with~$\tau$.  For each
finite subset $M \subseteq L_0$, extend~\eqref{wtcsigma} to the
following system of equations and inequalities in~$Z$ and~$\tau$:
\begin{align*}
  \varphi_\tau(x)Z[x] = 1 &\quad\mathrm{for\ }x \in M, \\
  \varphi_\tau(x)Z[x] > 1 &\quad\mathrm{for\ }x \in L_0 - (M \cup \{0\}), \\
  1 \leqslant \tau \leqslant \tau_0. \notag
\end{align*}
By Definition~\ref{deftausq}, this is equivalent to
\begin{align}
  \varphi(x)Z[x] = 1 \label{wtcsigmatau2}
  &\quad\mathrm{for\ }x \in M \cap M_0, \\
  \varphi(x)Z[x] = \frac{1}{\tau^2}
  &\quad\mathrm{for\ }x \in M - M_0, \notag \\
  \varphi(x)Z[x] > 1
  &\quad\mathrm{for\ }x \in M_0 - (M \cup \{0\}), \notag \\
  \varphi(x)Z[x] > \frac{1}{\tau^2}
  &\quad\mathrm{for\ }x \in L_0 - (M_0 \cup M), \notag \\
  1 \leqslant \tau \leqslant \tau_0. \notag
\end{align}

We now add one more variable to our system of linear coordinates.
Replace~$E$ with the real vector space $E\times \R$, where the
coordinate on the second factor is~$u$.  Rewrite~\eqref{wtcsigmatau2}
on $E\times \R$ by setting
\[
u = \frac{1}{\tau^2}.
\]
In the first four equations of~\eqref{wtcsigmatau2}, $u$ occurs (if at
all) only to the first power.  The last equation bounds~$u$ between
two constants, $1/\tau_0^2$ and~$1$.  Thus the equations and
inequalities~\eqref{wtcsigmatau2} \emph{are still linear} in the
coordinates of~$Z$ and~$u$, still a linear programming problem.
Exactly as in~\cite{Ash84}, the solution set for~$M$ is either empty
or is a convex polyhedron $\sigma(M)$, which is topologically a closed
cell.  Taken over all~$M$, the $\sigma(M)$ give~$\widetilde{W}^+$ the
structure of a regular cell complex on which $\Gamma$ acts cell-wise
with finite stabilizers of cells.  The cell structure has a natural
barycentric subdivision which descends to a cell complex structure
on~$W^+$.

We must show there are only finitely many cells~$\sigma$ up to the
action of~$\Gamma$.  Since a convex polyhedron is the convex hull of
its vertices, it suffices to show there are only finitely many
zero-dimensional cells (vertices)~$\sigma$ in $\widetilde{W}^+$ up to
the action of~$\Gamma$.  Fix a $\Z$-basis of~$L_0$.  Given $b\in\R$,
define the \emph{box for}~$b$ to be the set of vectors in~$L_0$ whose
coordinates with respect to the basis all have absolute
value~$\leqslant b$.  Let $\Gamma'$ be the intersection of the
conjugates of~$\Gamma$ in~$\Gamma_0$; this is a normal subgroup
of~$\Gamma_0$ of finite index.  By the finiteness for the ordinary
well-rounded retract~$\widetilde{W}_1$ mod~$\Gamma'$, there is a~$b_0$
so that every vertex of~$\widetilde{W}_1$ has, modulo~$\Gamma'$, the
form $\sigma_1(M)$ for an~$M$ contained in the box for~$b_0$.  Then
every vertex that is a solution to~\eqref{wtcsigmatau2} with
$1\leqslant\tau\leqslant\tau_0$ has, modulo~$\Gamma'$, the form
$\sigma(M)$ for an~$M$ contained in the box for~$b_0\tau_0$.  The box
has only finitely many subsets.
\end{proof}

\emph{Notation.} For any $\tau \geqslant 1$, we write $u = 1/\tau^2$
for the rest of the paper.  We write $W_u = W_{1/\tau^2}$, and
similarly for $\widetilde{W}_u$.

\subsection{Refining the Well-Tempered Complex}
\label{subsec:refining}

In $\widetilde{W}_\tau$ for a given~$\tau$, the non-empty cells
$\sigma_\tau(M)$ are in one-to-one correspondence with their sets of
minimal vectors~$M$.  In~$\widetilde{W}^+$, the cells~$\sigma$ are
also indexed by a finite amount of combinatorial data.  For
computation in~$\widetilde{W}^+$, it will be more convenient to take a
certain refinement of the cell structure.  In the refinement, the
combinatorial data in~$L_0$ will be kept separate from the
combinatorial data along the $\tau$-axis.

Since a non-empty~$\sigma$ is a closed convex polyhedron, its
projection onto the $u$-coordinate is a compact subinterval of
$[1/\tau_0^2, 1]$, possibly a single point.  Consider the
set~$\mathcal{U}$ of left and right endpoints of these intervals,
taken over all~$\sigma$.  Because the cells are convex, the endpoints
are the images of certain vertices of the~$\sigma$s.  By the
finiteness in Theorem~\ref{wtccell}, $\mathcal{U}$ is a finite subset
of $[1/\tau_0^2, 1]$.  We denote its elements
\begin{equation} \label{defcrittemp}
1/\tau_0^2 = u^{(0)} < u^{(1)} < \cdots < u^{(i_r)} = 1.
\end{equation}
The $u^{(i)}$, and their corresponding $\tau^{(i)} =
1/\sqrt{u^{(i)}}$, are called the \emph{critical temperaments}.  We
will always refine the cells~$\sigma$ of Theorem~\ref{wtccell} by
cutting them into closed pieces along the hyperplanes $u = u^{(i)}$
for $i = 0, \dots, i_r$.  Each non-empty cell of the refinement is
indexed by a pair.  The pair is $(M, [u^{(i-1)}, u^{(i)}])$ if the
projection of the cell to the $u$-coordinate is $[u^{(i-1)},
  u^{(i)}]$.  The pair is $(M, [u^{(i)}, u^{(i)}])$ if the projection
is $\{u^{(i)}\}$.  We will write $[u, u']$ as shorthand for both
$[u^{(i-1)}, u^{(i)}]$ and $[u^{(i)}, u^{(i)}]$.

\subsection{The First and Last Temperaments}

Since~$M_0$ is a sublattice of finite index in~$L_0$, there is a
positive integer~$N$ so that $N L_0 \subseteq M_0 \subseteq L_0$.
Let~$\xi$ be the maximum value of~$\varphi$ on $x \in M_0 - \{0\}$.
Let~$\eta$ be the minimum value of~$\varphi$ on $x\in L_0 - M_0$.

\begin{theorem} \label{wtctau0}
  Let~$L_0$, $\varphi$, and~$a$ be as in Theorem~\ref{wtccell}.
  Define
  \[
  \tau_0 = \sqrt{\xi/\eta} \, N.
  \]
  Let $\tau\geqslant\tau_0$.  Then the map $X \to X$ given by $gK
  \mapsto a^{-1} gK$ descends mod~$\Gamma$ to give a cell-preserving
  homeomorphism from the well-rounded retract $W_1$ over~$1$ to the
  well-rounded retract $W_\tau$ over~$\tau$.  If a cell over $\tau =
  1$ is $\sigma_1(Q)$ with index set $Q \subset L_0 - \{0\}$, then the
  cell that corresponds to $\sigma_1(Q)$ under the homeomorphism has
  index set $Qa$.
\end{theorem}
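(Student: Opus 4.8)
The plan is to compare the well-rounded retract over $\tau=1$, built from the weights $\varphi$ and the lattice $L_0$, with the well-rounded retract over $\tau\geqslant\tau_0$, built from the weights $\varphi_\tau$ of Definition~\ref{deftausq}. The map $gK\mapsto a^{-1}gK$ corresponds, on lattices, to $L\mapsto L a^{-1}$, and since $a$ sends $L_0$ onto $M_0=L_0 a$, it sends $M_0$ onto $L_0$. The key observation is that for $Z$ over~$1$ and $x\in L_0-\{0\}$ we have the length $Z[x]$, whereas for $Z'=a^{-1}$-translate over~$\tau$ the relevant quantity is $\varphi_\tau(y)Z'[y]$ for $y\in L_0-\{0\}$; writing $y=xa$ for $x\in a^{-1}L_0$, one checks $Z'[xa]$ equals $Z[x]$ up to the change of variables, and $\varphi_\tau(xa)=\varphi(xa)=\varphi(x)$ by hypothesis~\eqref{phi_a} when $xa\in M_0$, i.e. when $x\in L_0$. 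So on the sublattice $L_0\subseteq a^{-1}L_0$, the weighted length functions $\varphi_\tau(\cdot)Z'[\cdot]$ and $\varphi(\cdot)Z[\cdot]$ agree under $x\leftrightarrow xa$.

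The second step is to show that, once $\tau\geqslant\tau_0=\sqrt{\xi/\eta}\,N$, the minimal vectors of $Z'$ can only lie in $M_0$, so that the vectors in $a^{-1}L_0-L_0$ (which are "long" because of the $\tau^2$ penalty, and which is where the two lattices $L_0$ and $a^{-1}L_0$ differ) never participate. Concretely: if $x\in L_0-\{0\}$ realizes a weighted squared length $c$ at $\tau=1$, its image contributes weighted length $\approx c$ over $\tau$; whereas a vector in $a^{-1}L_0-L_0$, after scaling, corresponds to a vector of $L_0$ that is \emph{not} in $M_0$ (this is exactly the statement $y\notin M_0$), and so carries the factor $\tau^2$. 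The bound $\tau_0=\sqrt{\xi/\eta}\,N$ is engineered precisely so that the $\tau^2$-penalized vectors cannot compete: using $NL_0\subseteq M_0$, the shortest a non-$M_0$ vector can be (relative to an $M_0$ vector) is controlled by $N$, while the weight ratio between $M_0$-vectors and non-$M_0$-vectors is controlled by $\xi/\eta$. Once $\tau^2\geqslant (\xi/\eta)N^2=\tau_0^2$, every minimal vector of $Z'$ lies in $M_0$, hence pulls back under $a$ to a vector of $L_0$, and the arithmetic minimum and the set of minimal vectors of $Z'$ (with weights $\varphi_\tau$) correspond exactly to those of $Z$ (with weights $\varphi$).

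Granting that, the conclusion is formal. Being well rounded ($S\,M=S^n$) is preserved because $M(Z')=M(Z)a$ and right multiplication by $a$ is an $S$-module isomorphism of $S^n$; the normalization $m=1$ is preserved because the weighted minima agree; hence $gK\in\widetilde{W}_1$ iff $a^{-1}gK\in\widetilde{W}_\tau$, and the map descends mod $\Gamma=\Gamma_0\cap a^{-1}\Gamma_0a$ (which is exactly the common stabilizer, so the translation by $a^{-1}$ is $\Gamma$-equivariant for the two $\Gamma_0$-actions). Finally, the cell $\sigma_1(Q)$ is cut out by the linear conditions $\varphi(x)Z[x]=1$ for $x\in Q$ and $>1$ otherwise; transporting by $x\leftrightarrow xa$ turns these into $\varphi_\tau(xa)Z'[xa]=1$ for $xa\in Qa$ and $>1$ otherwise, which is precisely the defining system of $\sigma_\tau(Qa)$ in $\widetilde{W}_\tau$. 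So the homeomorphism is cell-preserving and sends $\sigma_1(Q)$ to the cell with index set $Qa$, as claimed.

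The step I expect to be the main obstacle is the quantitative estimate in the second paragraph: one must carefully track how a vector $y\in a^{-1}L_0$ decomposes relative to $L_0$ and $M_0$, be precise about which scalings of $Z$ and which normalizations of $\varphi_\tau$ are in force, and verify that the stated $\tau_0$ — not merely \emph{some} large $\tau_0$ — suffices. The subtlety is that both the weight ratio $\xi/\eta$ and the index-type bound $N$ enter, and one needs the combination $\sqrt{\xi/\eta}\,N$ exactly; getting the inequality to close with that constant, rather than something slightly larger, is where the real care is required. Everything else is bookkeeping with the linear-programming description of the cells from the proof of Theorem~\ref{wtccell}.
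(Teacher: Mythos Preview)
Your proposal is correct and follows essentially the same route as the paper. The paper isolates your ``second step'' as a separate Lemma and proves it by exactly the comparison you anticipate: for $x\notin M_0$ one has $Nx\in M_0$, so the weighted length of $x$ is at least $\tau^2\eta\langle x,x\rangle$ while that of $Nx$ is at most $\xi N^2\langle x,x\rangle$, forcing $\tau\leqslant\sqrt{\xi/\eta}\,N$ if $x$ is to be minimal; the paper also notes the tie case at $\tau=\tau_0$, where one simply drops any $x\notin M_0$ from the index set in favor of $Nx$. After that the paper's bookkeeping matches yours (working with $Z_1=aZ(a^*)^t$ and $M=Qa$), so your identification of the estimate as the only nontrivial point is accurate, and the constant $\sqrt{\xi/\eta}\,N$ does close the inequality on the nose.
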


We call the endpoints of $[1, \tau_0]$ the \emph{first} and
\emph{last} temperaments, respectively.  For example, if $D=\Q$ and
$T_a = T_{\ell,k}$ as in Section~\ref{subsubsec:defhecke1}, then
$\tau_0 = \ell$.

We turn to the proof of Theorem~\ref{wtctau0}.

\begin{lemma}
  With notation as in Theorem~\ref{wtctau0}, let $\tau\geqslant
  \tau_0$.  Let $L = L_0 g$ be a well-rounded lattice in the
  fiber over~$\tau$.  Then every minimal vector $x \in M_\tau(L)$ is
  in~$M_0 g$.
\end{lemma}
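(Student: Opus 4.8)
The plan is to show that a minimal vector of a $\tau$-well-rounded lattice with $\tau \ge \tau_0$ cannot lie outside $M_0 g$, by comparing weighted lengths of vectors in $M_0 g$ against those not in $M_0 g$. Recall $m_\tau(L) = \min\{\varphi_\tau^L(y)\langle y, y\rangle : y \in L - \{0\}\}$, where for $y = xg$ with $x \in L_0 - \{0\}$ we have $\varphi_\tau^L(xg) = \varphi_\tau(x)$, which is $\varphi(x)$ if $x \in M_0$ and $\tau^2\varphi(x)$ if $x \notin M_0$. First I would produce an upper bound on $m_\tau(L)$ coming from a vector inside $M_0 g$: since $L$ is well rounded, its minimal vectors span $S^n$, so in particular $M_\tau(L)$ is nonempty; pick any minimal $x g$ and note $N x g \in M_0 g$ (because $NL_0 \subseteq M_0$), with $\langle Nxg, Nxg\rangle = N^2\langle xg, xg\rangle$ and $\varphi(Nx) = \varphi(x) \le \xi$... — actually the cleanest bound is: any minimal $xg$ satisfies $\varphi_\tau(x)\langle xg,xg\rangle = m_\tau(L)$, and $Nxg \in M_0 g - \{0\}$ has weighted length $\varphi(Nx)N^2\langle xg,xg\rangle \le \xi N^2 \langle xg, xg\rangle \le \xi N^2 m_\tau(L)/\varphi_\tau(x) \le \xi N^2 m_\tau(L)/\eta$ when $x \notin M_0$ (using $\varphi_\tau(x) = \tau^2\varphi(x) \ge \eta$, though one must be a little careful since if $x \in M_0$ we are already done). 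So either $x \in M_0$ for every minimal vector — and we are finished — or there is a vector of $M_0 g$ with weighted length at most $(\xi N^2/\eta)\, m_\tau(L)$.

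Next I would derive a lower bound on the weighted length of any vector \emph{not} in $M_0 g$. If $y = x g$ with $x \in L_0 - M_0$, then $\varphi_\tau^L(y) = \tau^2\varphi(x) \ge \tau^2 \eta \ge \tau_0^2 \eta = (\xi/\eta)N^2 \cdot \eta = \xi N^2$. Thus $\varphi_\tau^L(y)\langle y, y\rangle \ge \xi N^2 \langle y, y\rangle$. The hard part — and the main obstacle — is to compare $\langle y, y\rangle$ for such a $y$ against the arithmetic minimum. The natural estimate is: the shortest \emph{unweighted} vector in the lattice $L_0 g$ has squared length at least (roughly) $m_\tau(L)$ divided by the largest weight, i.e. $\ge m_\tau(L)/\tau_0^2$, since $\varphi_\tau(x) \le \tau^2 \cdot 1$ always, giving $\langle xg, xg\rangle \ge m_\tau(L)/\tau^2$ for every $x$. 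But this is too weak as stated, so the key point to nail down is that a vector $y$ outside $M_0 g$, being in particular a nonzero lattice vector, has $\langle y, y\rangle \ge m_\tau(L)/\tau^2 \ge m_\tau(L)/\tau_0^2$, hence $\varphi_\tau^L(y)\langle y,y\rangle \ge \xi N^2 \cdot m_\tau(L)/\tau_0^2 = \xi N^2 m_\tau(L)/((\xi/\eta)N^2) = \eta m_\tau(L)$. This is the delicate step: reconciling this lower bound with the upper bound $(\xi N^2/\eta)m_\tau(L)$ from the previous paragraph requires the comparison to go the right way, and I expect the actual argument needs the sharper observation that a vector outside $M_0 g$ cannot itself be among the minimal vectors because its weight is inflated by $\tau^2$ while a suitable rescaling into $M_0 g$ (multiplying by $N$) inflates the length by only $N^2 \le \tau_0^2/(\xi/\eta) \le \tau^2/(\xi/\eta)$ — precisely the condition $\tau_0^2 \ge (\xi/\eta)N^2$ built into the definition of $\tau_0$.

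So the endgame: suppose for contradiction some minimal vector $y = xg \in M_\tau(L)$ has $x \notin M_0$. Then on one hand $\varphi_\tau^L(y)\langle y, y\rangle = m_\tau(L)$, and since $\varphi_\tau^L(y) = \tau^2\varphi(x) \ge \tau^2\eta$ we get $\langle y, y\rangle \le m_\tau(L)/(\tau^2\eta)$. On the other hand $Ny = Nxg \in M_0 g - \{0\} \subseteq L - \{0\}$, so by definition of the minimum $\varphi^L(Ny)\langle Ny, Ny\rangle \ge m_\tau(L)$; here $\varphi_\tau^L(Ny) = \varphi(Nx) = \varphi(x) \le \xi$ since $Nx \in M_0$, hence $\xi N^2 \langle y, y\rangle \ge m_\tau(L)$, i.e. $\langle y, y\rangle \ge m_\tau(L)/(\xi N^2)$. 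Combining, $m_\tau(L)/(\xi N^2) \le m_\tau(L)/(\tau^2\eta)$, so $\tau^2 \eta \le \xi N^2$, i.e. $\tau^2 \le \xi N^2/\eta = \tau_0^2$; but $\tau \ge \tau_0$ forces $\tau = \tau_0$ and equality throughout, which is excluded (or handled by noting $\tau_0$ can be taken strictly larger, or by using $\varphi(x) < \xi$ / strict inequality in the well-roundedness condition for non-minimal vectors). This contradiction completes the proof. I would present it in this contradiction form, as it is the most economical; the only real subtlety is bookkeeping whether the inequality $\tau \ge \tau_0$ should be strict, which I would resolve by checking the boundary case directly against Definition~\ref{deftausq}.
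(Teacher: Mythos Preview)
Your ``endgame'' paragraph is correct and is essentially the paper's own argument for the case $\tau > \tau_0$: assume a minimal vector $y = xg$ has $x \notin M_0$, compare the weighted length of $y$ (at least $\tau^2\eta\langle y,y\rangle$) against that of $Ny \in M_0 g$ (at most $\xi N^2\langle y,y\rangle$), and conclude $\tau^2\eta \le \xi N^2$, contradicting $\tau > \tau_0 = \sqrt{\xi/\eta}\,N$. The exploratory paragraphs before it are unnecessary; the contradiction argument is the whole proof.

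Where you diverge from the paper is the boundary case $\tau = \tau_0$. You correctly flag that the contradiction degenerates to an equality there, but your suggested fixes (taking $\tau_0$ strictly larger, or hoping for a strict inequality $\varphi(x) < \xi$) do not work: the theorem is stated for $\tau \ge \tau_0$, and nothing prevents $\varphi(x) = \xi$ and $\varphi(x) = \eta$ simultaneously. The paper's resolution is different in spirit. At $\tau = \tau_0$, a tie $\tau_0^2\eta = \xi N^2$ means the two conditions in the defining system of the cell --- the one for $x$ and the one for $Nx$ --- are identical and redundant. One therefore simply drops $x$ from the minimal-vector index set and keeps $Nx$; the cell is unchanged, and the (pruned) index set now lies in $M_0$. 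So the lemma at $\tau = \tau_0$ is really a statement about the index sets of cells rather than a literal statement about every vector achieving the minimum, and the paper handles it by this pruning rather than by a contradiction.
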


\begin{proof}[Proof of Lemma]
  First suppose $\tau > \tau_0$.  Assume by way of contradiction that
  $x \in M_\tau(L)$ is a minimal vector with $x \notin M_0 g$.  Note
  that $Nx \in M_0 g$.  The weighted length of~$x$ is $\varphi_\tau(x)
  \langle x, x \rangle \geqslant \tau^2 \eta \langle x, x \rangle$.
  The weighted length of~$Nx$ is $\varphi_\tau(Nx) \langle Nx, Nx
  \rangle \geqslant \xi N^2 \langle x, x \rangle$.  Since~$x$ was
  minimal, we have $\tau^2 \eta \leqslant \xi N^2$, or $\tau \leqslant
  \sqrt{\xi/\eta} N$, contradicting $\tau > \tau_0$.

  When $\tau = \tau_0$, the argument needs to be modified because
  there may be ties.  $\tau^2 \eta = \xi N^2$ may occur for an~$x$
  with $x \notin M_0 g$ but $Nx \in M_0 g$.  This says that two of the
  conditions in~\eqref{wtcsigmatau2}, those for~$x$ and~$Nx$, are
  identical and redundant in the fiber over this particular~$\tau_0$.
  We simply drop~$x$ from the minimal vector set and keep~$Nx$.
\end{proof}

\begin{proof}[Proof of Theorem~\ref{wtctau0}]
Let $g\in G$ be such that $L_0 g K \in W_\tau$.  We use the notation
$Z = g (g^*)^t$ of the proof of Theorem~\ref{wtccell}.  To say $L_0 g
K$ is well rounded over~$\tau$ is to say that there is a finite subset
$M \subset L_0 - \{0\}$ so that
\begin{align*}
  \varphi_\tau(x) Z[x] = 1
  &\quad\mathrm{for\ } x \in M \\
  \varphi_\tau(x) Z[x] > 1
  &\quad\mathrm{for\ } x \in L_0 - (M \cup \{0\}).
\end{align*}
By the Lemma, $M$ is a subset of $M_0 - \{0\}$; every $x\in M$ has the
form $x = ya$ for some $y \in L_0 - \{0\}$.  Let~$Q$ be the set of
these~$y$, so that $M = Qa$.  Let $Z_1 = a Z (a^*)^t$.  Then
\begin{align*}
  \varphi_\tau(x) Z_1[y] = 1
  &\quad\mathrm{for\ } y \in Q \\
  \varphi_\tau(x) Z_1[y] > 1
  &\quad\mathrm{for\ } y \in L_0 - (Q \cup \{0\})
\end{align*}
with $x=ya$.  Since $x\in M_0$, Definition~\ref{deftausq} says
$\varphi_\tau(x)$ is independent of~$\tau$, hence equals
$\varphi_1(x)$.  By the hypothesis~\eqref{phi_a}, $\varphi_1(x) =
\varphi_1(y)$.  Thus
\begin{align*}
  \varphi_1(y) Z_1[y] = 1
  &\quad\mathrm{for\ } x \in Q \\
  \varphi_1(y) Z_1[y] > 1
  &\quad\mathrm{for\ } x \in L_0 - (Q \cup \{0\}).
\end{align*}
In other words, $Z_1$ is well rounded in the fiber over~$1$.

It is easy to see this map is a homeomorphism between the well-rounded
retracts in the two fibers.  Since the map from the fiber over~$\tau$
to the fiber over~$1$ was multiplication by~$a$, the map from the
fiber over~$1$ to the fiber over~$\tau$ is multiplication by~$a^{-1}$.
The map on sets of minimal vectors from the fiber over~$1$ to the
fiber over~$\tau$ is $Q \mapsto M$, which is $Q \mapsto Qa$.
\end{proof}

\begin{corollary}
  For any two $\tau_1, \tau_2 \geqslant \tau_0$, the intersections of
  the well-tempered complex with the fibers over~$\tau_1$ and~$\tau_2$
  are equal to each other as subsets of $Y/K$, independently
  of~$\tau$;
\end{corollary}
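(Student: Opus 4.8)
The plan is to deduce the Corollary directly from Theorem~\ref{wtctau0}. That theorem gives, for each single $\tau \geqslant \tau_0$, a cell-preserving homeomorphism $W_1 \to W_\tau$ induced by $gK \mapsto a^{-1}gK$, together with the explicit description of how it acts on index sets: a cell $\sigma_1(Q)$ over $\tau=1$ is carried to a cell over $\tau$ with index set $Qa$. First I would observe that the target cell over $\tau$ is described purely in terms of the equations~\eqref{wtcsigmatau2} for the index set $M = Qa$, and that---since $M = Qa \subseteq M_0$---every equation $\varphi_\tau(x)Z[x]=1$ for $x \in M$ has $\varphi_\tau(x)$ independent of $\tau$ by Definition~\ref{deftausq}. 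So the \emph{system cutting out the cell} in the fiber over $\tau$ does not involve $\tau$ at all among the defining equalities; the only $\tau$-dependence in~\eqref{wtcsigmatau2} would come from the strict inequalities for $x \notin M_0$, and the content of the Lemma is precisely that for $\tau \geqslant \tau_0$ those inequalities are automatically implied (they cannot be tight). Hence for every $\tau \geqslant \tau_0$ the intersection of $\widetilde{W}^+$ with the fiber over $\tau$, viewed inside $X$ (equivalently $Y/K$ after quotienting by $\Gamma$), is cut out by exactly the same equations and inequalities, namely $\varphi(x)Z[x] \geqslant 1$ for $x \in M_0 - \{0\}$ with equality on a well-rounded set of minimal vectors $M \subseteq M_0$. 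This locus is manifestly independent of $\tau$.

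Concretely I would argue as follows. Fix $\tau_1, \tau_2 \geqslant \tau_0$. By Theorem~\ref{wtctau0}, both fibers $W_{\tau_1}$ and $W_{\tau_2}$ are the image of $W_1$ under the self-map of $\Gamma_0 \bs X$ (respectively $\Gamma \bs X$) induced by $gK \mapsto a^{-1}gK$; that map does not depend on $\tau$, so its images in the two fibers are literally the same subset of $Y/K$ once we identify the fibers $Y \times \{\tau_i\}$ with $Y$ in the canonical way from Section~\ref{sec:varywts}. Moreover the cell structures agree: the cell with index set $Qa$ in the fiber over $\tau_1$ and the cell with index set $Qa$ in the fiber over $\tau_2$ are both, by the computation in the proof of Theorem~\ref{wtctau0}, the set of $Z_1$ with $\varphi_1(y)Z_1[y] = 1$ for $y \in Q$ and $> 1$ for $y \in L_0 - (Q \cup \{0\})$ --- a description with no $\tau$ in it. So the two intersections coincide as cell complexes, not merely as sets.

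I do not expect a serious obstacle here; the Corollary is essentially a restatement of Theorem~\ref{wtctau0} plus the remark that the defining data of the cells over $\tau \geqslant \tau_0$ has had all $\tau$-dependence eliminated. The one point requiring a little care --- and the closest thing to a subtlety --- is the edge case $\tau = \tau_0$ itself, where the Lemma's proof noted that an equation for $x \notin M_0$ and the equation for $Nx \in M_0$ can coincide, so that $x$ sits on the boundary. One must check that dropping $x$ from the minimal-vector set and keeping $Nx$ (as the Lemma does) yields exactly the same cell and the same index set $Qa$ as for $\tau > \tau_0$, so that the fiber over $\tau_0$ is not anomalous. This follows because the redundant condition does not alter the solution set in $X$, only its bookkeeping, and the barycentric-subdivision cell structure depends only on the solution set. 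Hence the intersections over $\tau_1$ and $\tau_2$ agree for all $\tau_1, \tau_2 \geqslant \tau_0$, independently of $\tau$, which is the assertion of the Corollary.
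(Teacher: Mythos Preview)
Your proposal is correct and follows the same approach as the paper: both fibers are identified with $a^{-1}$ times the fiber over $\tau=1$ via Theorem~\ref{wtctau0}, and since that map is independent of~$\tau$ the two fibers coincide as subsets of $Y/K$. The paper's proof is a single sentence to this effect; your additional discussion of the cell structure and the $\tau=\tau_0$ edge case is correct but not needed for the Corollary as stated, which only asserts equality as subsets.
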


\begin{proof}
  By Theorem~\ref{wtctau0}, both are equal to~$a^{-1}$ times the fiber
  over~$\tau = 1$.
\end{proof}



\section{Computing Hecke Operators}
\label{sec:hos}

Let the Hecke pair $(\Gamma_0, \Delta)$ be as above.  Let~$\rho$ be
any left $\Z\Delta$-module.  There is a natural left action of the
Hecke algebra on cohomology groups for~$\Gamma_0$ with coefficients
in~$\rho$ \cite[\S1.1]{AS}.  For $a\in\Delta$, the action of the Hecke
correspondence~$T_a$ on the cohomology is called the \emph{Hecke
  operator}, and it will also be denoted~$T_a$.  In this section, we
will give an algorithm that uses the well-tempered complex to
compute~$T_a$.

\subsection{Equivariant Cohomology}

First, we specify the cohomology theory we will use.  This is the
\emph{equivariant cohomology} $H^*_{\Gamma_0}(X; \rho)$, the
cohomology of the group~$\Gamma_0$ with coefficients in the complex of
cochains on~$X$ with coefficients in~$\rho$ \cite[VII.7]{Br}.

In this paragraph, say a prime is ``bad'' if it divides the order of a
finite subgroup of~$\Gamma_0$.  Since~$\Gamma_0$ is arithmetic, it has
at most finitely many finite subgroups up to conjugacy.  Hence there
are at most finitely many bad primes.  Suppose the module underlying
the coefficient system~$\rho$ is a vector space over a field whose
characteristic is not divisible by the bad primes; this includes
characteristic zero.  Then the equivariant cohomology
$H^*_{\Gamma_0}(X; \rho)$ is canonically isomorphic to the cohomology
$H^*(\Gamma_0\bs X; \rho)$ of the space $\Gamma_0\bs X$ with
coefficients in~$\rho$, as well as to the group cohomology
$H^*(\Gamma_0; \rho)$.

To compute equivariant cohomology, we may replace~$X$ with any acyclic
cell complex on which~$\Gamma_0$ acts.The symmetric space $X = G/K$ is
contractible.  By Theorem~\ref{wrrmain}, the well-rounded retract
$\widetilde{W}$ is a strong deformation retract of~$X$, hence
contractible, hence acyclic.  Thus $H^*_{\Gamma_0}(X; \rho) =
H^*_{\Gamma_0}(\widetilde{W}; \rho)$.  Again by Theorem~\ref{wrrmain},
the fiber $\widetilde{W}_\tau$ of the well-tempered complex
$\widetilde{W}^+$ over any~$\tau$ is a strong deformation retract
of~$X$, hence acyclic.  This holds in particular for the fibers
$\widetilde{W}_{\tau^{(i)}}$ over the critical temperaments $\tau^{(i)}$.
Denote the portion
of $\widetilde{W}^+$ with $u$ between two consecutive critical
temperaments, $u \in [u^{(i-1)}, u^{(i)}]$, by
$\widetilde{W}_{[u^{(i-1)}, u^{(i)}]}$.  By Theorem~\ref{wtccont},
$\widetilde{W}_{[u^{(i-1)}, u^{(i)}]}$ is a strong deformation retract
of $X \times [u^{(i-1)}, u^{(i)}]$, hence is acyclic.  Any of the
$\widetilde{W}_{u^{(i)}}$ or $\widetilde{W}_{[u^{(i-1)}, u^{(i)}]}$
can be used to compute the equivariant cohomology of~$\Gamma$.  We can
compute it in finite terms because the complexes have only finitely
many $\Gamma$-orbits of cells.  $\widetilde{W}_{[u^{(i-1)}, u^{(i)}]}$
has dimension one higher than the vcd, but its cohomology in degree
vcd$+1$ will be zero.

\subsection{Definition of the Hecke Operator}

Apply the equivariant cohomology functor to the
diagram~\eqref{twomaps}:
\begin{equation} \label{twomapsequivar}
\xymatrix{
  H^*_{\Gamma}(X; \rho) \ar@/_/[d]_{p_*} \\
  H^*_{\Gamma_0}(X; \rho) \ar@/_/[u]_{q^*}
}
\end{equation}
The Hecke operator~$T_a$ on $H^*_{\Gamma_0}(X; \rho)$ is by definition
\[
p_* q^* .
\]
The map $q^* : H^*_{\Gamma_0}(X; \rho) \to H^*_{\Gamma}(X; \rho)$ is
the natural pullback map for~$q$. The map $p_* : H^*_{\Gamma}(X; \rho)
\to H^*_{\Gamma_0}(X; \rho)$ is the transfer map \cite[III.9]{Br}
for~$p$, which is defined because $\Gamma = \Gamma_0 \cap a^{-1}
\Gamma_0 a$ has finite index in~$\Gamma_0$.

\subsection{Computing Hecke Operators}

We now describe how to use~\eqref{twomapsequivar} with the
well-tempered complex to compute the Hecke operators in practice.

First, we compute~$p_*$.  We use $\tau=1$, the first temperament, when
working with~$p$.  The retracts $\widetilde{W}$ and $\widetilde{W}_1$
are equal by definition. $\Gamma_0$ acts on $\widetilde{W}$, and the
smaller group $\Gamma$ acts on $\widetilde{W}_1$.  Computing the
transfer map is straightforward.  (In practice it is tricky to get the
orientation questions correct.  This is true for all the cells, and
especially for the cells with non-trivial stabilizer subgroups.  This
comment applies to all the computations in this paper.)

Next, we compute~$q^*$.  The pullback map is natural on cohomology,
but we must account for the factor of~$a$ in the definition of~$q$.
The key is to use the \emph{last} temperament~$\tau_0$ when working
with~$q$.  We compute $H^*_\Gamma(X; \rho)$ as
$H^*_\Gamma(\widetilde{W}_{\tau_0}; \rho)$.  By Theorem~\ref{wtctau0},
there is a homeomorphism of cell complexes $\widetilde{W}_{\tau_0} \to
\widetilde{W}_1$, from the last temperament to the first, given by
multiplication by~$a$.  As we saw for~$p$, $\widetilde{W}_1$
equals~$\widetilde{W}$.  Thus there is a cellular map which enables us
to compute $q^* : H^*_{\Gamma_0}(\widetilde{W}; \rho) \to
H^*_\Gamma(\widetilde{W}_{\tau_0}; \rho)$.

During the computation, the cells are represented by the combinatorial
data $(M, [u, u'])$ of Section~\ref{subsec:refining}.  By
Theorem~\ref{wtctau0}, the map in the last paragraph involves
\emph{dividing} $M$ by~$a$; the map sends $(M, [u, u'])$ to $(M
a^{-1}, [u, u'])$.  The Lemma before Theorem~\ref{wtctau0} showed
that, for the last temperament~$\tau_0$, all the index sets $M$ are
subsets of the sublattice $M_0 = L_0 a$.  This is why $M a^{-1}$ will
be a subset of~$L_0$ (will be ``integral'').

Computing only $p_*$ and $q^*$ does not give us the Hecke operator.
The map of Theorem~\ref{wtctau0} involves dividing or multiplying
by~$a$.  It is not a $\Gamma$-map, because $a\in\Delta$ but $a \notin
\Gamma$ in general.  For this reason, we cannot directly map
$H^*_\Gamma(\widetilde{W}_{\tau_0}; \rho)$ to
$H^*_\Gamma(\widetilde{W}_1; \rho)$.

To overcome this last difficulty, we use the whole well-tempered
complex to define a chain of morphisms and quasi-isomorphisms.  For
$i=1,\dots, i_r$, in the portion $\widetilde{W}_{[u^{(i-1)},u^{(i)}]}$
over the fibers $u \in [u^{(i-1)},u^{(i)}]$, define the closed
inclusions of the fibers on the left and right sides:
\[
  \widetilde{W}_{u^{(i-1)}}
  \xhookrightarrow{l^{(i-1)}}
  \widetilde{W}_{[u^{(i-1)},u^{(i)}]}
  \xhookleftarrow{r^{(i)}}
  \widetilde{W}_{u^{(i)}}
  \]
By Theorems~\ref{wtccont} and~\ref{wtccell}, we can compute the
pullbacks $(l^{(i-1)})^*$ and the pushforwards $(r^{(i)})_*$ on
$H^*_\Gamma(\dots; \rho)$.  The pullback is a naturally defined
cellular map.  The pushforward $(r^{(i)})_*$ is a quasi-isomorphism,
the inverse of the pullback $(r^{(i)})^*$; we compute the pullback at
the cochain level using the cellular map, then invert the map on
cohomology.

We summarize our algorithm as a theorem.

\begin{theorem} \label{wtcalgor}
  With notation as above, the Hecke operator $T_a$ on equivariant
  cohomology~\eqref{twomapsequivar} may be computed in finite terms as
  the composition
  \[
  p_* {l^{(0)}}^* r^{(1)}_* {l^{(1)}}^* r^{(2)}_* \cdots {l^{(i_r-1)}}^* r^{(i_r)}_* q^* .
  \]
\end{theorem}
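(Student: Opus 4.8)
The plan is to verify that the stated composition equals $p_* q^*$ by showing that the "middle" string of maps, $l^{(0)*}\,r^{(1)}_*\,l^{(1)*}\,r^{(2)}_*\cdots l^{(i_r-1)*}\,r^{(i_r)}_*$, is the identity on $H^*_\Gamma(X;\rho)$ after the identifications $H^*_\Gamma(\widetilde W_1;\rho)\cong H^*_\Gamma(X;\rho)\cong H^*_\Gamma(\widetilde W_{\tau_0};\rho)$ furnished by acyclicity (Theorems~\ref{wrrmain}, \ref{wtccont}, \ref{wtccell}). Concretely, for each $i$ one has the closed inclusions $\widetilde W_{u^{(i-1)}}\xhookrightarrow{l^{(i-1)}}\widetilde W_{[u^{(i-1)},u^{(i)}]}\xhookleftarrow{r^{(i)}}\widetilde W_{u^{(i)}}$, and each of the three complexes is acyclic, so all of $l^{(i-1)*}$, $r^{(i)*}$, $r^{(i)}_*$ are isomorphisms on equivariant cohomology. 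The composite $l^{(i-1)*}\,r^{(i)}_*$ is then the canonical comparison isomorphism $H^*_\Gamma(\widetilde W_{u^{(i)}};\rho)\xrightarrow{\sim}H^*_\Gamma(\widetilde W_{u^{(i-1)}};\rho)$ induced by the homotopy equivalences to a common contractible model. Telescoping over $i=1,\dots,i_r$ identifies the whole middle string with the canonical isomorphism $H^*_\Gamma(\widetilde W_{\tau_0};\rho)\xrightarrow{\sim}H^*_\Gamma(\widetilde W_1;\rho)$.

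**First** I would make precise the claim that $r^{(i)}_*$ is "the inverse of the pullback $r^{(i)*}$": since $\widetilde W_{u^{(i)}}\hookrightarrow\widetilde W_{[u^{(i-1)},u^{(i)}]}$ is a strong deformation retract-compatible inclusion (the right fiber is a deformation retract of the slab by Theorem~\ref{wtccont}), the pullback $r^{(i)*}$ is an isomorphism, and the pushforward/transfer-type map $r^{(i)}_*$ is defined precisely as its cohomological inverse at the chain level; so $r^{(i)}_*\circ r^{(i)*}=\mathrm{id}$ and $r^{(i)*}\circ r^{(i)}_*=\mathrm{id}$ on cohomology. Then $l^{(i-1)*}\circ r^{(i)}_* = (l^{(i-1)*})\circ(r^{(i)*})^{-1}$, and because both $l^{(i-1)}$ and $r^{(i)}$ are homotopy equivalences onto the acyclic slab, the equivariant-cohomology square through the contractible model $X\times[u^{(i-1)},u^{(i)}]$ commutes; the resulting map is the unique natural isomorphism between the cohomologies of two acyclic $\Gamma$-complexes, i.e.\ it respects the identification with $H^*_\Gamma(X;\rho)$.

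**Next** I would splice the telescope together with the two ends. The left end $p_*$ is the transfer for $\widetilde W=\widetilde W_1$ with $\Gamma\le\Gamma_0$, which is exactly the $p_*$ of~\eqref{twomapsequivar} after the identification $H^*_{\Gamma_0}(\widetilde W;\rho)=H^*_{\Gamma_0}(X;\rho)$. The right end $q^*$ is the pullback along $q:\Gamma g K\mapsto\Gamma_0 a g K$; Theorem~\ref{wtctau0} shows that multiplication by $a$ gives a cell-preserving homeomorphism $\widetilde W_{\tau_0}\to\widetilde W_1$ that intertwines the $\Gamma$-action on the source with the conjugated action on the target, and the key point is that under this homeomorphism the naturally-defined pullback to $H^*_\Gamma(\widetilde W_{\tau_0};\rho)$ realizes precisely $q^*:H^*_{\Gamma_0}(X;\rho)\to H^*_\Gamma(X;\rho)$ — this is where the "factor of $a$" is absorbed and where compatibility of $q^*$ with the $\Z\Delta$-module structure on $\rho$ (via $a$) must be checked against the definition in~\cite{AS}. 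Assembling: the full composition is $p_*$ (on $\widetilde W_1$) followed by the telescope isomorphism $H^*_\Gamma(\widetilde W_{\tau_0};\rho)\xrightarrow{\sim}H^*_\Gamma(\widetilde W_1;\rho)$ — which is the identity on $H^*_\Gamma(X;\rho)$ — preceded by $q^*$ (landing in $H^*_\Gamma(\widetilde W_{\tau_0};\rho)$). Hence the composition equals $p_*q^*=T_a$. Finiteness of the computation is immediate from Theorems~\ref{wtccell} and~\ref{wtctau0}, which bound the number of $\Gamma$-orbits of cells in every complex involved.

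**The main obstacle** I expect is bookkeeping rather than conceptual: namely, pinning down that the chain-level cellular maps $l^{(i-1)}$, $r^{(i)}$, and the homeomorphism of Theorem~\ref{wtctau0} all intertwine the \emph{same} identification of $H^*_\Gamma$ with group cohomology $H^*(\Gamma;\rho)$, so that the telescope genuinely collapses to the identity and not merely to \emph{some} automorphism. In particular one must verify that inverting $r^{(i)*}$ at the cochain level (as the algorithm does) yields a map compatible on the nose with the deformation retractions, and that orientations of cells are tracked consistently through each pushforward and through multiplication by $a$ — exactly the orientation subtlety flagged parenthetically in Section~\ref{sec:hos}. Once the naturality of every arrow with respect to the common contractible model $X$ (resp.\ $X\times[\cdot]$) is established, the telescoping is formal.
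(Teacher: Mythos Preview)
Your proposal is correct and follows the same approach as the paper. The paper presents Theorem~\ref{wtcalgor} as a summary of the preceding discussion rather than giving a separate proof: it explains $p_*$ and $q^*$ at the first and last temperaments, notes that the $a$-map is not a $\Gamma$-map so one cannot pass directly from $\widetilde W_{\tau_0}$ to $\widetilde W_1$, and then introduces the chain of $l^{(i-1)*}$, $r^{(i)}_*$ through the slabs as the remedy; you have simply made explicit the telescoping argument---that each slab and fiber is acyclic, so each $l^{(i-1)*}\circ r^{(i)}_*$ is the canonical comparison isomorphism and the middle string collapses to the identity on $H^*_\Gamma(X;\rho)$---which the paper leaves implicit.
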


Note that the maps $p_*$, $q^*$, and ${l^{(i)}}^*$ and $r^{(i)}_*$ for
the various~$i$ can be computed in parallel to speed up the
computation.

\subsection{Cohomology of Subgroups}
\label{subsec:levelN}

Let $\Gamma' \subseteq \Gamma_0$ be an arithmetic subgroup.  We wish
to compute Hecke operators on the equivariant cohomology
$H^*_{\Gamma'}(X; \rho)$ for any~$\Gamma'$.  By Shapiro's Lemma
\cite[III.6.2]{Br}, $H^*_{\Gamma'}(X; \rho) \cong H^*_{\Gamma_0}(X;
\mathrm{Coind}_{\Gamma'}^{\Gamma_0}\rho)$.  We use
Theorem~\ref{wtcalgor} to compute the latter.


\section{Computing the Well-Tempered Complex}
\label{sec:hecketope}

We now describe practical algorithms for finding a list of
representatives of the $\Gamma$-orbits of cells in the well-tempered
complex.

In broad terms, there are two kinds of steps.  First, we take an
individual~$\tau$ and find all the cells in $W_\tau$ up to
$\Gamma$-equivalence.  Second, we determine the interval of~$\tau'$
around the given~$\tau$ where the cell structure does not change.  The
endpoints of this interval are two consecutive members of the set of
critical temperaments.  We also carry out the first step at the
critical temperaments themselves.  We repeat the first and second
steps, starting at different~$\tau$, until the entire interval $[1,
  \tau_0]$ has been covered.

As we said in the Introduction, our algorithm for Hecke operators has
two parts, one-time work and every-time work.  The one-time work is to
compute~$\widetilde{W}^+$ for a given $L_0$, set of weights~$\varphi$,
and $a\in\Delta$.  The every-time work is to pick a~$\rho$ and compute
the Hecke operators on $H^*_{\Gamma_0}(X; \rho)$.  As in
Section~\ref{subsec:levelN}, computing Hecke operators for a range of
different levels~$N$ comes under the heading of every-time work.

\subsection{Finding the Cells for One Fiber}

In Section~\ref{wtccellpf} we introduced the space~$E_v$ of $n\times
n$ Hermitian matrices over~$D_v$.  Put the positive definite inner
product $\langle\langle A, B \rangle\rangle_v = \sum_{i,j=1}^n A_{ij}
B_{ij}^*$ on~$E_v$.  Then $\langle\langle A, B \rangle\rangle = \sum_v
\Tr^{D_v}_\R \langle\langle A, B \rangle\rangle_v$ is a positive
definite inner product on $E = \prod_v E_v$.  For a row vector $x\in
L_0$, let~$\psi = \psi(x)$ be the matrix whose component~$\psi_v$ at
the $v$-th place has $i,j$ entry equal to $(x_v)_i^* (x_v)_j$.  Let $Z
= g (g^*)^t$.  The next proposition generalizes Voronoi's original
work on perfect forms \cite[\S15]{Vor}.

\begin{proposition}
  With~$x$, $\psi$, and~$Z$ as above, $Z[x] = \langle\langle Z_v, \psi
  \rangle\rangle$.
\end{proposition}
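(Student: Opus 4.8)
The plan is to unwind both sides of the asserted identity into sums over the archimedean places~$v$ and over matrix indices, and then match them term by term; the only tool needed is that the reduced trace $\Tr^{D_v}_\R$ is cyclic, $\Tr^{D_v}_\R(\alpha\beta) = \Tr^{D_v}_\R(\beta\alpha)$, even when $D_v$ is noncommutative. Throughout I read the right-hand side as $\langle\langle Z,\psi\rangle\rangle = \sum_v \Tr^{D_v}_\R\langle\langle Z_v,\psi_v\rangle\rangle_v$, following the definition just given.

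First I would record the intermediate identity $Z[x] = \sum_v \Tr^{D_v}_\R\bigl(x_v Z_v x_v^*\bigr)$. This is immediate from $Z = g(g^*)^t$, from $Z[x] = \langle xg,xg\rangle = \sum_v \langle x_v g_v, x_v g_v\rangle_v$, from the definition of $\langle\,,\,\rangle_v$, and from cyclicity of $\Tr^{D_v}_\R$; indeed it is essentially the formula used in Section~\ref{wtccellpf} to define positive-definiteness of~$Z_v$.

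Next I would pass to coordinates. Writing $x_v Z_v x_v^* = \sum_{k,l=1}^n (x_v)_k (Z_v)_{kl}(x_v)_l^*$ and cycling the factor $(x_v)_k$ to the end under the trace gives $\Tr^{D_v}_\R\bigl((x_v)_k (Z_v)_{kl}(x_v)_l^*\bigr) = \Tr^{D_v}_\R\bigl((Z_v)_{kl}\,(x_v)_l^*(x_v)_k\bigr) = \Tr^{D_v}_\R\bigl((Z_v)_{kl}(\psi_v)_{lk}\bigr)$, since $(\psi_v)_{lk} = (x_v)_l^*(x_v)_k$ by definition of~$\psi$. Summing over $k,l$ and over~$v$ yields $Z[x] = \sum_v\sum_{k,l}\Tr^{D_v}_\R\bigl((Z_v)_{kl}(\psi_v)_{lk}\bigr)$. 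On the other side, $\langle\langle Z_v,\psi_v\rangle\rangle_v = \sum_{k,l}(Z_v)_{kl}(\psi_v)_{kl}^*$, and $\psi_v$ is Hermitian, so $(\psi_v)_{kl}^* = (\psi_v)_{lk}$ and hence $\Tr^{D_v}_\R\langle\langle Z_v,\psi_v\rangle\rangle_v = \sum_{k,l}\Tr^{D_v}_\R\bigl((Z_v)_{kl}(\psi_v)_{lk}\bigr)$; summing over~$v$ matches the previous expression, proving the proposition.

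The computation is routine and presents no genuine obstacle. The only points requiring care are the noncommutativity of~$D_v$, so that cyclicity of the reduced trace, not commutativity, must be invoked at each regrouping, and the fact, used implicitly above, that both $\psi_v$ and $Z_v = g_v(g_v^*)^t$ are Hermitian --- the latter being precisely what makes the symmetric pairing $\langle\langle\,,\,\rangle\rangle$ the natural one to appear on the right.
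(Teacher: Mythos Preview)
Your proof is correct and follows essentially the same approach as the paper: both arguments unwind $Z[x]$ into a sum over places and matrix indices and invoke cyclicity of $\Tr^{D_v}_\R$ to regroup the factors into the form $\langle\langle Z_v,\psi_v\rangle\rangle_v$. The only organizational difference is that you first pass to the intermediate form $\sum_v \Tr^{D_v}_\R(x_v Z_v x_v^*)$ and then expand, whereas the paper keeps $g$ explicit throughout and recognizes $Z_v = g_v(g_v^*)^t$ only at the end; this is a cosmetic distinction, not a different method.
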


\begin{proof}
For elements in the image of the embedding $D \hookrightarrow D_v$, we
drop the subscript~$v$ for simplicity.
\begin{align*}
  Z[x] &= \langle xg, xg \rangle \\
  &= \sum_v \sum_{i=1}^n \Tr^{D_v}_\R(((xg)_i)^* (xg)_i) \\
  &= \sum_v \Tr^{D_v}_\R \left( \sum_{i=1}^n ((xg)_i)^* (xg)_i \right) \\
  &= \sum_v \Tr^{D_v}_\R \left(
  \sum_{i=1}^n \left( \sum_{j=1}^n x_j g_{ji} \right)^*
  \left( \sum_{k=1}^n x_k g_{ki} \right)
  \right) \\
  &= \sum_v \Tr^{D_v}_\R \left(
  \sum_{i,j,k} g_{ji}^* x_j^* x_k g_{ki} 
  \right) \\
  &= \sum_v \Tr^{D_v}_\R \left(
  \sum_{i,j,k} g_{ki} g_{ji}^* x_j^* x_k
  \right) \\
  &= \sum_v \Tr^{D_v}_\R \left(
  \sum_{j,k} \left( \sum_i g_{ki} g_{ji}^* \right) x_j^* x_k
  \right) \\
  &= \sum_v \Tr^{D_v}_\R \left(
  \sum_{j,k} (Z_v)_{kj} (x_k^* x_j)^*
  \right) \\
  &= \sum_v \Tr^{D_v}_\R \langle\langle Z_v, \psi \rangle\rangle_v \\
  &= \langle\langle Z_v, \psi \rangle\rangle.
\end{align*}
\end{proof}

Now let the notation be as in Theorem~\ref{wtctau0}.
\[
\varphi_\tau(x) Z[x] = \left\{
\begin{array}{cl}
  \varphi(x) Z[x] & \quad\mathrm{if\ }x \in M_0 - \{0\}, \\
  \tau^2\varphi(x) Z[x] & \quad\mathrm{if\ }x \notin M_0.
\end{array} \right.
\]
Define
\[
\psi_\tau(x) = 
\left\{
\begin{array}{cl}
  \psi & \quad\mathrm{if\ }x \in M_0 - \{0\}, \\
  \tau^2 \psi & \quad\mathrm{if\ }x \notin M_0.
\end{array} \right.
\]
\begin{definition}
  For a given~$\tau \in [1, \tau_0]$, the \emph{Hecketope} at~$\tau$
  is the intersection of~$X$ with the convex hull~$\mathcal{H}_\tau$
  in~$E$ of the points $\psi_\tau(x)$ for all $x\in L_0 - \{0\}$.
\end{definition}
The \emph{Voronoi polyhedron} is the Hecketope at $\tau = 1$
\cite[\S A.3.4]{St}.

We must explain why we took the intersection with~$X$ in the
definition.  The points $\psi_\tau(x)$ lie on the boundary of the
cone~$X$ in~$E$.  They are not in~$X$ (assuming $n\geqslant 2$),
because their rank over~$D$ is one; they are positive semidefinite,
not positive definite.  If~$F$ is a face of~$\mathcal{H}_\tau$ and
$\mathrm{int}\,F$ is the relative interior of~$F$, then
either~$\mathrm{int}\,F$ lies in~$X$ or is disjoint from~$X$.  The
Hecketope contains only the $\mathrm{int}\,F$ that lie in~$X$.

We remark that $\mathcal{H}_\tau$ is not compact, because it has
infinitely many vertices $\psi_\tau$.  It is not locally finite at the
faces where $\mathrm{int}\,F$ does not lie in~$X$.

Examining Formulas~\eqref{wtcsigmatau2} gives the

\begin{corollary} There is a one-to-one, inclusion-reversing
  correspondence between the faces of the Hecketope at~$\tau$ and the
  faces of the fiber $\widetilde{W}_\tau$ of the well-tempered complex
  over~$\tau$.
\end{corollary}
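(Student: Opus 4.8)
The plan is to establish the correspondence by transcribing the linear conditions in~\eqref{wtcsigmatau2} into the language of faces of a convex hull, using the Proposition that $Z[x] = \langle\langle Z, \psi(x)\rangle\rangle$. First I would fix $\tau$ and recall that a point $Z$ lies in $\widetilde{W}_\tau$ precisely when, for the finite set $M = M_\tau(Z)$, we have $\varphi_\tau(x)Z[x] = 1$ for $x\in M$ and $\varphi_\tau(x)Z[x] > 1$ for all other $x\in L_0-\{0\}$. Applying the Proposition and the definition of $\psi_\tau$, this reads $\langle\langle Z, \psi_\tau(x)\rangle\rangle = 1$ for $x\in M$ and $\langle\langle Z, \psi_\tau(x)\rangle\rangle > 1$ for the rest. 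Thus every vertex $\psi_\tau(x)$ of $\mathcal{H}_\tau$ satisfies $\langle\langle Z,\cdot\rangle\rangle \geqslant 1$, with equality exactly on $M$; in other words, $Z$ defines a supporting hyperplane $\{A : \langle\langle Z,A\rangle\rangle = 1\}$ of $\mathcal{H}_\tau$, and the face of $\mathcal{H}_\tau$ it cuts out is the convex hull of $\{\psi_\tau(x) : x\in M\}$.

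Next I would make the correspondence precise in both directions. Given a non-empty cell $\sigma_\tau(M)$ of $\widetilde{W}_\tau$, the discussion above assigns to it the face $F(M)$ of $\mathcal{H}_\tau$ spanned by the $\psi_\tau(x)$ with $x\in M$; since $Z$ is positive definite, $\mathrm{int}\,F(M)$ meets $X$, so $F(M)$ is indeed a face of the Hecketope. Conversely, given a face $F$ of the Hecketope, pick a point in its relative interior; as it lies in $X$ it is a positive definite $Z$, and the set of vertices $\psi_\tau(x)$ lying on $F$ determines a finite subset $M\subseteq L_0-\{0\}$ with $\langle\langle Z,\psi_\tau(x)\rangle\rangle$ equal to the common minimum value on $M$ and strictly larger off it; after the overall normalization making this minimum $1$ (the homothety used throughout, as in the $r^{(1)}$ step), $Z\in\sigma_\tau(M)$. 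These two assignments are mutually inverse because in $\widetilde{W}_\tau$ a non-empty cell is determined by its minimal-vector set $M$ (stated in Section~\ref{subsec:refining}) and a face of a polytope is determined by its vertex set. Inclusion-reversal is immediate: $\sigma_\tau(M)\subseteq\overline{\sigma_\tau(M')}$ iff $M'\subseteq M$ (a larger set of active equalities is a smaller, more special cell), whereas $F(M)\subseteq F(M')$ iff the vertex set of $F(M)$ is contained in that of $F(M')$, i.e. $M\subseteq M'$.

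The main technical point to handle carefully is the passage between the closed cells / closed faces and their relative interiors, together with the caveat that $\mathcal{H}_\tau$ is not locally finite and has faces whose relative interior is disjoint from~$X$. I would address this exactly as the paragraph before the Corollary does: restrict attention to those faces $F$ with $\mathrm{int}\,F\subseteq X$, which is automatic for the faces arising from cells of $\widetilde{W}_\tau$ since a relative-interior point is a genuine positive definite form of full rank. One should also note that the normalization by homothety is harmless: scaling $Z$ by a positive real scales all the values $\langle\langle Z,\psi_\tau(x)\rangle\rangle$ uniformly, so which face of $\mathcal{H}_\tau$ is supported does not change, and we may always rescale so that the supported value is $1$, matching~\eqref{wtcsigma}. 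I expect the only real work is bookkeeping of these normalizations and the int/closure bookkeeping; the algebraic heart—that $Z[x]$ is the linear functional $\langle\langle Z,\psi_\tau(x)\rangle\rangle$ and hence that $\widetilde{W}_\tau$ is literally the polar-dual face poset of the Hecketope—is handed to us by the Proposition and by Voronoi's classical picture that it generalizes.
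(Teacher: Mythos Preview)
Your argument is correct and is exactly the approach the paper intends: the paper's entire proof is the single line ``Examining Formulas~\eqref{wtcsigmatau2} gives the Corollary,'' and you have simply unwound that polar-duality reading of~\eqref{wtcsigmatau2} via the Proposition $Z[x]=\langle\langle Z,\psi(x)\rangle\rangle$. One small slip to fix: the sentence ``since $Z$ is positive definite, $\mathrm{int}\,F(M)$ meets $X$'' is a non sequitur as written---$Z$ lives in $\widetilde{W}_\tau$, not on $F(M)$; the correct reason is that $Z\in\widetilde{W}_\tau$ forces $M$ to span $S^n$ (well-roundedness), so any strictly positive convex combination of the rank-one matrices $\psi_\tau(x)$, $x\in M$, is positive definite, whence $\mathrm{int}\,F(M)\subset X$.
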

For instance, a facet (codimension-1 face) of the Hecketope
corresponds to a vertex (dimension~0) of the well-rounded retract.
When $D=\Q$, the polyhedral cones in the Voronoi tiling of~$X$ as
in~\cite{Vor} are exactly the cones spanned by the faces of the
Voronoi polyhedron.

If~$F$ is a face of~$\mathcal{H}_\tau$ for which $\mathrm{int}\,F$ is
disjoint from~$X$, then Formulas~\eqref{wtcsigmatau2} determine a set
of points which are not well rounded.  That is why we exclude such
$\mathrm{int}\,F$ from the Hecketope.

\subsection{Finding Representative Cells modulo~$\Gamma$}

For a given~$\tau$, we need an algorithm to find a complete set of
representatives of the cells of $\widetilde{W}_\tau$ modulo~$\Gamma$.
By the last Corollary, it is equivalent to provide an algorithm to
find a complete set of representatives of the faces of the Hecketope
$\mathcal{H}_\tau \cap X$ modulo~$\Gamma$.  We describe two such
algorithms.

\subsubsection{Contiguous Forms}

The first algorithm generalizes Voronoi's own method.  Find a
facet~$F_0$ of the Hecketope at which to start.  Enumerate all the
facets $F_{01}, F_{02}, \dots$ of this facet (these have codimension
two in the Hecketope).  Each $F_{0j}$ is the border between exactly
two facets of the Hecketope; the first is $F_0$, and the second is a
new facet \emph{contiguous} to $F_0$.  Voronoi gave an algorithm for
finding the contiguous facet when $D=\Q$ \cite[\S\S22--23]{Vor}; a
modern formulation, with ideas for implementing it on computers,
appears in \cite[\S7.8]{Mart}.  Enumerate all the facets contiguous to
$F_0$ across $F_{01}, F_{02}, \dots$.  Continue this process
recursively.  When the algorithm computes a new facet, check whether
it is $\Gamma$-equivalent to one we have already found.  The algorithm
stops when all the new facets one finds are $\Gamma$-equivalent to old
facets.  Once we have a list of finitely many facets, it is
straightforward to enumerate their faces in all the smaller dimensions
and classify them up to $\Gamma$-equivalence.

Specialized, highly optimized algorithms have been designed for
particular~$D$ and~$n$.  Examples include \cite{perf8}
and~\cite{EvGS}.

\subsubsection{A Global Method}

At present, our code uses a second method.  We begin with an intuitive
description.  We choose a finite subset~$B \subset L_0 - \{0\}$,
roughly the lattice points in a ball around the origin.  We take the
convex hull~$\mathcal{H}'$ of $\{\psi_\tau(x) \mid x\in B\}$ in~$E$.
$\mathcal{H}'\cap X$ is a chunk of the Hecketope $\mathcal{H}\cap X$
near the origin of~$E$.  The chunk has been truncated far from the
origin (since $\mathcal{H}'$ is compact, while $\mathcal{H}$ is not).
Near the region where the truncation occurs, $\mathcal{H}'$ will have
faces which are not faces of $\mathcal{H}$.

Let us make precise how we choose~$B$.  Choose a reasonably good
lattice basis of~$L_0$ as $\Z$-lattice.  (If $D=\Q$ and $L_0 = \Z^n$,
choose the standard basis.)  Put on~$L_0$ the usual 2-norm for the
coordinates on that basis.  Choose a parameter~$c$, the \emph{vertex
  count}, which is the approximate number of points we want to have
in~$B$.  Using the classical formula for the volume of a ball in
$\dim_\R S^n$ dimensions, choose an~$r_1$ so that the ball of
radius~$r_1$ contains about $1.25c$ vectors of~$L_0 - \{0\}$.  (We
always count $x$ and $-x$ as a single vector.  Similarly, if $u\in D$
is a unit with norm of absolute value~1 in each $D_v$, we count $x$
and $xu$ as a single vector.  The extra 0.25 is a margin of error.)
Let $\mathcal{L}_1$ be the list of these $x\in L_0$ within
radius~$r_1$.  Choose~$r_2$ so that the ball of radius~$r_2$ contains
about $1.25c$ vectors of~$M_0 - \{0\}$.  Let $\mathcal{L}_2$ be the
list of these $x\in M_0$.  If $x \in \mathcal{L}_1 \cap
\mathcal{L}_2$, remove~$x$ from $\mathcal{L}_1$.  For each $x \in
\mathcal{L}_1 \cup \mathcal{L}_2$, compute $\psi_\tau(x)$.  Give
$\psi_\tau(x)$ a score; we have experimented with different scores,
but currently we use $\sum z^* z$ for~$z$ running down the diagonal of
the matrix $\psi_\tau(x)$.  Let~$B$ contain the best~$c$ vectors $x
\in \mathcal{L}_1 \cup \mathcal{L}_2$, where ``best'' means the scores
are the least.  If the scores are tied, take a few more vectors
than~$c$ until the tie is broken.  The main point of this algorithm is
that $\psi_\tau(x)$ is scaled by a factor of~$\tau^2$ when
$x\in\mathcal{L}_1$, but not when $x\in\mathcal{L}_2$.  When $\tau$ is
near~1, $B$ contains the shortest~$c$ vectors in $L_0 -\{0\}$.  But
when~$\tau$ is near~$\tau_0$, almost all the vectors in~$B$ are
in~$M_0 - \{0\}$.  For mid-range~$\tau$, $B$ contains a mixture.

Once~$B$ is chosen, we compute the convex hull $\mathcal{H}'$ of
$\{\psi_\tau(x) \mid x\in B\}$ in~$E$ using Sage's class
\texttt{Polyhedron} over~$\Q$ \cite{SageMath}.  The
\texttt{Polyhedron} class has a method which enumerates all the faces
of~$\mathcal{H}'$ in every dimension.  We keep those faces whose
relative interior lies in~$X$, discarding those where it does not.  We
use~\eqref{wtcsigmatau2} to check whether the face is really a face
of~$\mathcal{H}$, and we discard it if not.  The faces that remain
form a set $\check{W}_\tau$.  We find representatives of the
$\Gamma$-orbits in $\check{W}_\tau$ by a straightforward search.

If we choose~$c$ too small, we observe that the set $\check{W}_\tau$
will not be a closed subcomplex.  Some small-dimensional faces of
$\mathcal{H}$ will be computed incorrectly in $\mathcal{H}'$ near
where $\mathcal{H}$ is truncated.  Because of the inclusion-reversing
correspondence in the last Corollary, this means some
large-dimensional faces will be missing in $\check{W}_\tau$; for
example, two $k$-cells may be missing their common $(k-1)$-dimensional
face.  However, when~$c$ is large enough, we do get a closed cell
complex equal to $W_\tau$.

As we have said, computing the Hecketopes is one-time work.  We take
the position that one-time work can be relatively slow, as long as the
every-time work is fast.  Still, choosing a large~$c$ makes the
one-time work very slow, and it generates large intermediate files
until we have finished modding out by~$\Gamma$.  A plan for our future
work on the Hecketopes is to replace the global method with the method
of contiguous forms.

\subsection{Finding Consecutive Critical Temperaments}

In this section, let $F$ be a given facet of the Hecketope at~$\tau$.
Dually, $F$ is a vertex of the retract $\widetilde{W}_\tau$
over~$\tau$.  As in~\eqref{wtcsigmatau2}, $F$ is characterized by a
set~$M$ of minimal vectors in $L_0 - \{0\}$.  Because~$F$ is a vertex
of the retract, there is a unique~$Z$ satisfying~\eqref{wtcsigmatau2}
for~$M$ and~$\tau$.  As before, change variables to~$u$ by $u =
1/\tau^2$.

Our next goal is to find the largest interval $[u_1, u_2]$ containing
a given~$u$ so that, for all $u'\in (u_1, u_2)$, \eqref{wtcsigmatau2}
still has a unique solution at~$u'$ for our chosen~$F$ and~$M$.  At
the endpoints $u' = u_1$ or $u' = u_2$, the solution
to~\eqref{wtcsigmatau2} could be unique for a proper superset of~$M$
(in other words, some new independent minimal vectors could appear).


To find $[u_1, u_2]$, we observe that~\eqref{wtcsigmatau2}, for a
given~$M$, defines a pencil of Hermitian forms in the variable~$u'$.
The left-hand side of the equations is a linear function of the
entries of~$Z$.  The right-hand sides are either~$1$ or~$u'$,
depending on whether or not $x\in M_0$.  Using symbolic algebra in
Sage, we find the solution $Z = Z(u')$ of this pencil.  The entries of
the matrix $Z(u')$ are linear polynomials in~$u'$.

The coefficients in this pencil are in~$D$ (are ``rational''), since
all the $x\in L_0$.  Thus the symbolic computations we are about to do
are based on rational arithmetic.  For simplicity, in the rest of this
section, we describe our implementation of the algorithm for $D=\Q$,
where $S=\R$.  If~$D$ is a more general number field or division
algebra, the algorithm would need modifications, such as using complex
interval arithmetic when we use real interval arithmetic below.

It may be that $Z(u')$ only has a solution at the one point $u' = u$.
This happens if and only if we are at a critical temperament.  If it
happens, we stop and report that the largest interval $[u_1, u_2]$ is
$[u,u]$.  Otherwise, we continue searching for a larger interval.

The algorithm we are about to describe is like Voronoi's original
contiguous form algorithm \cite[\S\S22--23]{Vor} \cite[\S7.8]{Mart},
but adapted to the variation of the temperament.  We find $\det Z(u')$
symbolically as a polynomial~$f(u')$.  Replacing~$f$ by its gcd with
its derivative, we guarantee that~$f$ has no repeated roots.  Using
real interval arithmetic (Sage's \texttt{RIF}), we find approximations
to the real roots of~$f$.  Our~$u$ itself is not a root, since~$Z$ is
positive definite there.  The real roots closest to~$u$ on either side
are where~$Z$ become a degenerate Hermitian form.  The interval $[u_1,
  u_2]$ must be properly contained in the interval cut out by the real
roots, because of positive definiteness.  We now describe how to
find~$u_1$ (the left side); finding~$u_2$ is similar.  We find~$u_1$
using a bisection argument.  Let $u_r$ be the real root of~$f$ closest
to~$u$ on the left.  Start with $u'$ halfway between $u_r$ and $u$.
If the equations in~\eqref{wtcsigmatau2} have no new minimal vectors
beyond those in~$M$, move $u'$ further to the left, halfway between
its current position and $u_r$.  After enough bisections, we must find
some new minimal vectors, because we are getting closer and closer to
where $\det Z(u') = 0$, the boundary of~$X$.  Once we find a $u'$
where the set $M'$ of minimal vectors in the equations
in~\eqref{wtcsigmatau2} is a proper superset of~$M$, we are
essentially done.  The finite number of vectors in $M' - M$ give
finitely many conditions; the rightmost of those conditions
determines~$u_1$.

\subsection{Finding the Well-Tempered Complex}

We complete Section~\ref{sec:hecketope}, giving a practical algorithm
for finding a list of representatives of the $\Gamma$-orbits of cells
in the well-tempered complex~$\widetilde{W}^+$.  The input is~$L_0$, a
set of weights~$\varphi$, and $a\in\Delta$.  We begin by finding the
Hecketope at $u^{[0]} = 1$.  This finds the fiber of $\widetilde{W}^+$
over $u^{[0]}=1$ (the well-rounded retract of~\cite{Ash84}), together
with the smallest possible $u^{[1]}$ so that the cell structure does
not change for $u' \in \left(u^{[1]}, u^{[0]}\right]$.  At the
  critical temperament $u = u^{[1]}$, the cell structure has changed,
  and we compute the fiber over it.  Next, we take a $u' < u^{[1]}$
  but close to $u^{[1]}$, and we compute the fiber over~$u'$ together
  with the interval around~$u'$.  If $u'$ was close enough, then the
  right-hand endpoint of the interval will indeed be $u^{[1]}$ (if
  not, we pick $u'$ even closer to $u^{[1]}$ and start over).  The
  interval around $u'$ where the cell structure does not change is
  then $\left(u^{[2]}, u^{[1]}\right)$.  Here $u^{[2]}$ is the next
  critical temperament. We continue in this way.  We stop when the
  left-hand endpoint of the interval is $1/\tau_0^2$.  We have
  generated the points $u^{[0]}$, \dots, $u^{[i_r]}$ in right-to-left
  order, but we renumber them as~$u^{(i)}$ in left-to-right order to
  match~\eqref{defcrittemp}.


\section{Some Results for Subgroups of $\SL_3(\Z)$}
\label{sec:sl3ex}

Let $\Gamma_0(N)$ be the subgroup of $\SL_3(\Z)$ consisting of the
matrices congruent to $\left[\begin{smallmatrix} *&*&* \\ *&*&*
    \\ 0&0&* \end{smallmatrix}\right]$ modulo~$N$.  We present some
cohomology computations for $\Gamma_0(N)$ for levels $N=2$
through~$7$.  For these~$N$, the group $(\Z/N\Z)^\times$ of units
mod~$N$ is cyclic, and hence the group of Dirichlet characters
$(\Z/N\Z)^\times \to \C^\times$ is cyclic.  Let~$\chi_N$ be a
generator of this Dirichlet group.

\begin{table}
\begin{tabular}{|c|c||c|c|c|c|}
  \hline
  $N$ & $\eta$ & $H^0$ & $H^1$ & $H^2$ & $H^3$ \\
  \hline
  \hline
  $2$ & $1$ & $1\oplus\varepsilon\oplus\varepsilon^2$ & & & \\
  \hline
  \hline
  $3$ & $1$ & $1\oplus\varepsilon\oplus\varepsilon^2$ & & & \\
  \hline
  $3$ & $\pm1$ & & & $\chi_3\oplus\varepsilon\oplus\varepsilon^2$ &  \\
  & & & & $1\oplus\varepsilon\oplus\chi_3\varepsilon^2$ &  \\
  \hline
  \hline
  $4$ & $1$ & $1\oplus\varepsilon\oplus\varepsilon^2$ & &
                      & $1\oplus\varepsilon\oplus\varepsilon^2$\\
  \hline
  $4$ & $\pm1$ & & & $\chi_4\oplus\varepsilon\oplus\varepsilon^2$ &  \\
  & & & & $1\oplus\varepsilon\oplus\chi_4\varepsilon^2$ &  \\
  \hline
  \hline
  $5$ & $1$ & $1\oplus\varepsilon\oplus\varepsilon^2$ & & & \\
  \hline
  $5$ & $\pm1$ & & & & $1\oplus\chi_5^2\varepsilon\oplus\varepsilon^2$ \\
  \hline
  $5$ & $\chi_5$ & & & $\chi_5\oplus\varepsilon\oplus\varepsilon^2$ &  \\
  & & & & $1\oplus\varepsilon\oplus\chi_5\varepsilon^2$ &  \\
  \hline
  \hline
  $6$ & $1$ & $1\oplus\varepsilon\oplus\varepsilon^2$ & &
                      & $1\oplus\varepsilon\oplus\varepsilon^2 \quad(\dim 2)$ \\
  \hline
  $6$ & $\pm1$ & &
  & $\chi_6\oplus\varepsilon\oplus\varepsilon^2 \quad(\dim 2)$ & \\
  & &
  & & $1\oplus\varepsilon\oplus\chi_6\varepsilon^2 \quad(\dim 2)$ & \\
  \hline
  \hline
  $7$ & $1$ & $1\oplus\varepsilon\oplus\varepsilon^2$ & & & \\
  \hline
  $7$ & $\pm1$ & & & $\chi_7^3\oplus\varepsilon\oplus\varepsilon^2$ & \\
  & & & & $1\oplus\varepsilon\oplus\chi_7^3\varepsilon^2$ & \\
  & & & & $\varepsilon\oplus$ (7.3.b.a) & \\
  \hline
  $7$ & $\chi_7^2$ & & & & $1\oplus\chi_7^2\varepsilon\oplus\varepsilon^2$ \\
  \hline
  $7$ & $\chi_7$ & & & $\chi_7\oplus\varepsilon\oplus\varepsilon^2$ & \\
  & & & & $1\oplus\varepsilon\oplus\chi_7\varepsilon^2$ & \\
  \hline
\end{tabular}
\caption{Galois representations for the cohomology $H^i(\Gamma_0(N);
  \eta)$ of congruence subgroups of $\SL_3(\Z)$ with nebentype
  coefficients~$\eta$.}
\label{tab:sl3ex}
\end{table}

Any Dirichlet character~$\eta$ mod~$N$ defines a \emph{nebentype}
character $\Gamma_0(N) \to \C^\times$ by $A \mapsto \eta(a_{33})$,
where $a_{33}$ is the bottom right entry of~$A$ reduced mod~$N$.  We
denote the nebentype character by the same symbol~$\eta$.  In
Table~\ref{tab:sl3ex}, $\eta=1$ is the trivial character.  $\eta =
\pm1$ is the quadratic character, the unique character whose image is
$\{\pm1\}$.  Other~$\eta$ are given as powers of $\chi_N$.  When
nebentypes differ by an automorphism of the Dirichlet group, it
suffices to take one representative, since the automorphism induces an
isomorphism on cohomology.

We computed the cohomology groups $H^i(\Gamma_0(N); \eta)$ for all
$i=0,\dots,3$ and all nebentypes~$\eta$.  (The vcd is~$3$, so $H^i$
vanishes for $i>3$.)  We then computed the Hecke operators $T_{\ell,
  k}$ on these cohomology groups, for $\ell=2,3,5,7$ and $k=1,2,3$.
For simplicity, we omitted~$\ell$ if $\ell\mid N$.  We decomposed the
cohomology into its common Hecke eigenspaces for the operators that we
computed.

The results are in Table~\ref{tab:sl3ex}.  A blank cell means the
cohomology group is~$0$.  Each non-blank entry indicates a non-zero
Hecke eigenspace.  The eigenspaces have dimension~$1$, except for
those labeled $(\dim 2)$, which have dimension~$2$.

The computations were done in exact arithmetic over the smallest field
containing the image of~$\eta$.  This field was~$\Q$ when $\eta=1$
or~$\pm1$, and otherwise was the appropriate \texttt{QuadraticField}
from Sage.

If~$E$ is a Hecke eigenspace for the operators $T_{\ell,k}$ for
\emph{all} primes~$\ell\nmid N$ and $k=1,2,3$, then~$E$ has an
attached Galois representation.  Let $a_{\ell,k}$ be the eigenvalue
for $T_{\ell,k}$.  The \emph{Hecke polynomial} at~$\ell$ is
\[
1 - a_{\ell,1} X + \ell a_{\ell,2} X^2 - \ell^3 a_{\ell,3} X^3.
\]
This is the characteristic polynomial of the Galois representation
for~$E$ at the Frobenius over~$\ell$.  The factorization of the Hecke
polynomial determines the Galois representation.  For example,
let~$\varepsilon$ be the cyclotomic character, the Galois
representation taking the value~$\ell$ at~$\ell$ for all $\ell\nmid
N$.  For any Dirichlet character~$\chi$, a factor of $1 - \chi(\ell)
\ell^m X$ in the Hecke polynomials for all~$\ell$ indicates that the
Galois representation for~$E$ has a direct summand $\chi
\varepsilon^m$, the tensor product of~$\chi$ with~$m$ copies
of~$\varepsilon$.  As a second example coming from Eichler-Shimura, a
classical cuspidal Hecke eigenform of weight~$k$ and character~$\chi$
has Hecke polynomial $1 - a_\ell X + \ell \cdot \chi(\ell) \cdot
\ell^{k-2} X^2$, where $a_\ell$ is the cusp form's Hecke eigenvalue
at~$\ell$.  (For an introduction to these ideas, see \cite{AGG}
\cite{AGM1} and the references there.)

Our Hecke eigenspaces~$E$ are determined by only finitely many~$\ell$.
We say that a Galois representation \emph{seems to be attached} to~$E$
if our data matches the Galois representation for all the~$\ell$ we
have computed.  The entries in Table~\ref{tab:sl3ex} are the Galois
representations that seem to be attached to~$E$.  We found their
direct sum decompositions by factoring the Hecke polynomials.  We
believe we have computed enough~$\ell$ so that the Galois
representations really are attached; the results follow the patterns
observed for $\SL_3$ in \cite{AGG} \cite{AGMY} and for $\SL_4$ in
\cite{AGM1} \cite{AGM3} \cite{AGM7}.

(7.3.b.a) is the cusp form $q - 3q^{2} + 5q^{4} - 7q^{7} - 3q^{8} +
9q^{9} + O(q^{10})$.  This is the unique newform of level~7 and
weight~3.  Its name comes from the $L$-functions and Modular Forms
Database \cite{lmfdb}.


\bibliographystyle{plain} \bibliography{wtc}

\begin{thebibliography}{10}

\bibitem{Ash84}
Avner Ash.
\newblock Small-dimensional classifying spaces for arithmetic subgroups of
  general linear groups.
\newblock {\em Duke Math.\ J.}, 51(2):459--468, June 1984.

\bibitem{Ash86}
Avner Ash.
\newblock A note on minimal modular symbols.
\newblock {\em Proc.\ Amer.\ Math.\ Soc.}, 96:394--396, 1986.

\bibitem{AGG}
Avner Ash, Daniel Grayson, and Philip Green.
\newblock Computations of cuspidal cohomology of congruence subgroups of
  {SL}(3,\textbf{Z}).
\newblock {\em J.~Number Theory}, 19:412--436, 1984.

\bibitem{AGM8}
Avner Ash, Paul Gunnells, and Mark McConnell.
\newblock Cohomology of congruence subgroups of {SL}(4,\textbf{Z}) with
  symmetric power coefficients.
\newblock In preparation.

\bibitem{AGM1}
Avner Ash, Paul Gunnells, and Mark McConnell.
\newblock Cohomology of congruence subgroups of {SL}(4,\textbf{Z}).
\newblock {\em J.~Number Theory}, 94(1):181--212, 2002.

\bibitem{AGM3}
Avner Ash, Paul Gunnells, and Mark McConnell.
\newblock Cohomology of congruence subgroups of {SL}(4,\textbf{Z}) {III}.
\newblock {\em Math.\ Comp.}, 79:1811--1831, 2010.

\bibitem{AGM7}
Avner Ash, Paul Gunnells, and Mark McConnell.
\newblock Cohomology with twisted coefficients of congruence subgroups of
  {SL}(4,\textbf{Z}).
\newblock {\em J.~Algebra}, 2020.

\bibitem{AGMY}
Avner Ash, Paul Gunnells, Mark McConnell, and Dan Yasaki.
\newblock On the growth of torsion in the cohomology of arithmetic groups.
\newblock {\em J.~Inst.\ Math.\ Jussieu}, pages 1--33, 2018.

\bibitem{AR}
Avner Ash and Lee Rudolph.
\newblock The modular symbol and continued fractions in higher dimensions.
\newblock {\em Inventiones Math.}, 55:241--250, 1979.

\bibitem{AS}
Avner Ash and Glenn Stevens.
\newblock Cohomology of arithmetic groups and congruences between systems of
  {H}ecke eigenvalues.
\newblock {\em J.\ Reine\ Angew.\ Math.}, 365:192--220, 1986.

\bibitem{Br}
Kenneth~S. Brown.
\newblock {\em Cohomology of Groups}, volume~87 of {\em Grad.\ Texts in Math.}
\newblock Springer-Verlag, 1982.

\bibitem{CreAr}
J.~E. Cremona and M.~T. Aran{\'e}s.
\newblock Congruence subgroups, cusps and {M}anin symbols over number fields.
\newblock {\em Contrib.\ Math.\ Comput.\ Sci.}, 6:109--127, 2014.

\bibitem{Cre84}
John~E. Cremona.
\newblock Hyperbolic tessellations, modular symbols, and elliptic curves over
  complex quadratic fields.
\newblock {\em Compositio Math.}, 51(3):275--324, 1984.

\bibitem{DM}
Edward Dunne and Mark McConnell.
\newblock Pianos and continued fractions.
\newblock {\em Math.\ Magazine}, 72(2):104--115, April 1999.

\bibitem{EvGS}
P.~Elbaz-Vincent, H.~Gangl, and C.~Soul{\'e}.
\newblock Perfect forms, {K}-theory, and the cohomology of modular groups.
\newblock {\em Advances in Math.}, 245:587--624, 2013.

\bibitem{Gel}
Stephen~S. Gelbart.
\newblock {\em Automorphic Forms on Adele Groups}.
\newblock Number~83 in Annals of Math.\ Stud. Princeton Univ.\ Press, 1975.

\bibitem{GMY}
Paul Gunnells, Mark McConnell, and Dan Yasaki.
\newblock Cohomology of {GL}(3) over the {E}isenstein integers.
\newblock {\em J.~Experimental Math.}, 2019.
\newblock DOI: 10.1080/10586458.2019.1577767.

\bibitem{Gun}
Paul~E. Gunnells.
\newblock Computing {H}ecke eigenvalues below the cohomological dimension.
\newblock {\em J.~Experimental Math.}, 9(3):351--367, 2000.

\bibitem{lmfdb}
The {LMFDB Collaboration}.
\newblock The {L}-functions and modular forms database.
\newblock {\tt http://www.lmfdb.org}, 2020.

\bibitem{Man}
Ju.~I. Manin.
\newblock Parabolic points and zeta-functions of modular curves.
\newblock {\em Math.\ USSR Izvestija}, 6(1):19--64, 1972.

\bibitem{Mart}
Jacques Martinet.
\newblock {\em Perfect Lattices in Euclidean Spaces}, volume 327 of {\em
  Grundlehren der math.\ Wiss.}
\newblock Springer-Verlag, 2003.

\bibitem{Men79}
Eduardo~R. Mendoza.
\newblock Cohomology of {PGL}(2) over imaginary quadratic integers.
\newblock {\em Universit{\"a}t Bonn Math.\ Inst.}, 1979.

\bibitem{Rahm}
Alexander~D. Rahm.
\newblock Higher torsion in the abelianization of the full {B}ianchi groups.
\newblock {\em LMS J. Comput.\ Math.}, 16:344--365, 2013.

\bibitem{Sh}
Goro Shimura.
\newblock {\em Introduction to the Arithmetic Theory of Automorphic Functions},
  volume~11 of {\em Publ.\ Math.\ Soc.\ Japan}.
\newblock Princeton Univ.\ Press, 1971.

\bibitem{GGHSSY}
M.~Dutour Sikiric, H.~Gangl, P.~Gunnells, J.~Hanke, A.~Schuermann, and
  D.~Yasaki.
\newblock On the cohomology of linear groups over imaginary quadratic fields.
\newblock {\em J. Pure Appl.\ Algebra}, 220:2564--2589, 2016.

\bibitem{perf8}
M.~Dutour Sikiric, A.~Schuermann, and F.~Vallentin.
\newblock Classification of eight dimensional perfect forms.
\newblock {\em Electronic Res.\ Ann.\ AMS}, 13:21--32, 2007.

\bibitem{St}
William Stein.
\newblock {\em Modular Forms, a Computational Approach}, volume~79 of {\em
  Grad.\ Stud.\ in Math.}
\newblock Amer.\ Math.\ Soc., 2007.
\newblock With an appendix by Paul E.\ Gunnells.

\bibitem{SageMath}
{The Sage Developers}.
\newblock {\em {S}ageMath, the {S}age {M}athematics {S}oftware {S}ystem
  ({V}ersion 9.1)}, 2020.
\newblock {\tt https://www.sagemath.org}.

\bibitem{vGvdKTV}
Bert van Geemen~et al.
\newblock Hecke eigenforms in the cohomology of congruence subgroups of
  {SL}(3,\textbf{Z}).
\newblock {\em J.~Experimental Math.}, 6:163--174, 1997.

\bibitem{Vog85}
Karen Vogtmann.
\newblock Rational homology of {B}ianchi groups.
\newblock {\em Math.\ Annalen}, 272:399--419, 1985.

\bibitem{Vor}
Georges Voronoi.
\newblock Nouvelles applications des param\`{e}tres continus \`{a} la
  th\'{e}orie des formes quadratiques: 1~{S}ur quelques propri\'{e}t\'{e}s des
  formes quadratiques positives parfaites.
\newblock {\em J.\ Reine\ Angew.\ Math.}, 133:97--178, 1908.

\end{thebibliography}

\end{document}